\documentclass[11pt,reqno]{amsart}

\usepackage{amsthm,amsmath,amssymb}
\usepackage{ascmac}
\usepackage{fullpage}
\usepackage{graphicx}

\newcommand{\E}[0]{\mathbb{E}}

\newcommand{\R}[0]{\mathbb{R}}

\newcommand{\Prob}[0]{\mathbb{P}}

\renewcommand{\tilde}{\widetilde}
\renewcommand{\hat}{\widehat}
\renewcommand{\le}{\leqslant}
\renewcommand{\ge}{\geqslant}

\DeclareMathOperator{\Cov}{Cov}

\DeclareMathOperator{\Card}{Card} 

\DeclareMathOperator*{\argmin}{arg\,min}
\DeclareMathOperator*{\argmax}{arg\,max}

\linespread{1.2}

\theoremstyle{plain}
\newtheorem{theorem}{Theorem}
\newtheorem{lemma}{Lemma}
\newtheorem{proposition}{Proposition}
\newtheorem{corollary}{Corollary}
\newtheorem{assumption}{Assumption}

\theoremstyle{definition}

\newtheorem{remark}{Remark}


\begin{document}
\title[QR approach to modal regression]{Quantile regression approach to conditional mode estimation}

\author[H. Ota]{Hirofumi Ota}
\author[K. Kato]{Kengo Kato}
\author[S. Hara]{Satoshi Hara}

\date{The first arXiv version: November 13, 2018. This version: \today}
\thanks{This paper supersedes ``On estimation of conditional modes using multiple quantile regressions'' (Hirofumi Ohta and Satoshi Hara, arXiv:1712.08754).} 

\address[H. Ota]{
Graduate School of Economics, University of Tokyo \\
7-3-1 Hongo, Bunkyo-ku, Tokyo 113-0033, Japan
}
\email{hirofumi-ohta@g.ecc.u-tokyo.ac.jp}

\address[K. Kato]{
Department of Statistics and Data Science, Cornell University \\
1194 Comstock Hall, Ithaca, NY 14853, USA
}
\email{kk976@cornell.edu}

\address[S. Hara]{
The Institute of Scientific and Industrial Research, Osaka University \\
Mihogaoka 8-1, Ibaraki, Osaka 567-0047, Japan
}
\email{satohara@ar.sanken.osaka-u.ac.jp}

\begin{abstract}
In this paper, we consider estimation of the conditional mode of an outcome variable given regressors. To this end, we propose and analyze a computationally scalable estimator derived from a linear quantile regression model and develop asymptotic distributional theory for the estimator. Specifically, we find that the pointwise limiting distribution is a scale transformation of Chernoff's distribution despite the presence of regressors. 
In addition, we consider analytical and subsampling-based confidence intervals for the proposed estimator.
We  also conduct Monte Carlo simulations to assess the finite sample performance of the proposed estimator together with the analytical and subsampling confidence intervals.
Finally, we apply the proposed estimator to predicting the net hourly electrical energy output using Combined Cycle Power Plant Data.
\end{abstract}

\keywords{Chernoff's distribution, cube root asymptotics, modal regression, quantile regression}

\subjclass[2010]{62J02 and 62G20}

\maketitle

\section{Introduction}

Estimation of the conditional mode of an outcome variable given regressors, called \textit{modal regression}, is an active research area in the recent statistics literature. In particular, if the conditional distribution is highly skewed or has fat tails, 
then one would be more interested in the conditional mode  than the conditional mean or median since in such cases the mean or median may fail to capture a major trend of  the conditional distribution. 
As such, modal regression has a wide variety of applications including the analysis of traffic and forest fire data \cite{EinbecTutz2006,YaoLi2014}, econometrics \cite{Lee1989,Lee1993,KempSantos2012,Ho2017}, and machine learning \cite{Sasaki2016,Feng2017}. For example, \cite{KempSantos2012} argue that the mode is the most intuitive measure of central tendency for positively skewed data found in many econometric applications such as  wages, prices, and expenditures (\cite{KempSantos2012}, p. 93). See also \cite{Chen2017} and \cite{Chacon2018} for recent reviews on modal regression. 

Existing approaches to estimation of the conditional mode includes nonparametric kernel estimation \cite{Chen2016} and linear modal regression \cite{Lee1989,Lee1993,KempSantos2012,YaoLi2014}, among others. 
The nonparametric estimation is able to avoid model misspecification but has slow rates of convergence that deteriorate as the number of regressors increases. Namely, if the number of continuous regressors is $p$, then the rate of convergence of the kernel density based estimator in \cite{Chen2016} is at best $n^{-2/(p+7)}$ under four times differentiability of the joint density.  On the other hand, the linear modal regression is able to avoid such ``curse of dimensionality'' but requires to solve a multi-dimensional non-convex optimization problem.  

In this paper, we propose a new estimator for the conditional mode that is able to avoid  the curse of dimensionality and at the same time is computationally scalable, thereby complementing the above existing methods.
The proposed method is based on the observation that the derivative of the conditional quantile function with respect to the quantile index is the reciprocal of the conditional density evaluated at the conditional quantile function and hence the conditional mode is obtained by minimizing the derivative of the conditional quantile function. 
Specifically, we assume a linear quantile regression model to estimate the conditional quantile function as in \cite{KoenkerBassett1978} (see also \cite{Koenker2005}), and estimate its derivative by a numerical differentiation of the estimated conditional quantile function. 
The proposed estimator is then obtained by minimizing the estimated derivative. 
Notably, the proposed method  is computationally attractive since the computation of the quantile regression estimate can be formulated as a linear programming problem and so is highly scalable (cf. Chapter 6 in \cite{Koenker2005}), and the minimization of the estimated derivative is a one-dimensional optimization problem and so can be carried out by a grid search. 

We develop asymptotic theory for the proposed estimator, which turns out to be non-standard. 
Specifically, we find that the proposed estimator has convergence rate $(nh^{2})^{-1/3}$ where $n$ is the sample size and $h=h_n \to 0$  is a sequence of bandwidths, and the limiting distribution is a scale transformation of Chernoff's distribution \cite{Chernoff1964}. Chernoff's distribution is defined as the distribution of a maximizer of a two-sided Brownian motion with a negative quadratic drift, and appears as e.g. limiting distributions of estimators for monotone functions; see \cite{GroeneboomWellner2001}.
Our result on the limiting distribution would be of interest from theoretical and practical perspectives. First, the proposed estimator provides a new example of estimators having Chernoff's distribution as limiting distributions, which would be of theoretical interest. 
Second, the fact that the limiting distribution is a scale transformation of Chernoff's distribution makes inference for our estimator relatively simple. 
This is in contrast to e.g. Manski's maximum score \cite{Manski1975} whose limiting distribution is a maximizer of a Gaussian process with its covariance function depending on the distribution of regressors; see \cite{KimPollard1990}. 
Building upon the limiting distribution, we develop inference methods for our estimator. 
The one is an analytical confidence interval based on consistently estimating the scaling constant, and the other is based on the subsampling \cite{PolitisRomano1994,Politis1999}. 
We also derive a multivariate limit theorem for the proposed estimator, which can be used to construct  simultaneous confidence intervals for the modal function over finite design points.

In addition to the theoretical results, we conduct Monte Carlo simulations to assess the finite sample performance of the proposed estimator together with the analytical and subsampling confidence intervals. 
We  suggest a practical method to choose the bandwidth based upon the idea suggested in \cite{KoenkerMachado1999}. We compare the performance of the proposed estimator with the linear modal regression estimator of \cite{KempSantos2012,YaoLi2014} via the root mean square error for the two data generating processes  where the true modal function is linear or nonlinear. 
Finally, we apply the proposed estimator to predicting the net hourly electrical energy output using Combined Cycle Power Plant Data \cite{Kaya2012,Tufekci2014}.
These numerical results show evidence that the proposed estimator works well in the finite sample.

The literature related to this paper is broad. Nonparametric estimation of the unconditional mode goes back to Parzen \cite{Parzen1962} and Chernoff \cite{Chernoff1964} in 1960s; see also \cite{Romano1988}.
Modal regression originates from \cite{SagerThisted1982} and the literature has flourished since then \cite{Lee1989,Lee1993,EinbecTutz2006,KempSantos2012,Yao2012,YaoLi2014,Chen2016,ZhouHuang2016,Sasaki2016,Ho2017,Krief2017,KhardaniYao2017,Feng2017}.
However, none of these papers do not consider a quantile regression based estimator for the conditional mode. 
\cite{Lee1989,Lee1993,KempSantos2012,YaoLi2014} consider linear modal regression; \cite{Lee1989,Lee1993} assume a restrictive condition that the conditional distribution is symmetric around the origin to derive limiting distributions of the estimators. The symmetry of the conditional distribution implies that  the conditional mean, median, and mode are all identical. Subsequently, \cite{KempSantos2012,YaoLi2014} relax the symmetry assumption and propose estimators that enjoy asymptotic normality. 
In the present paper, instead of linearity of the conditional mode, we assume a linear quantile regression model. Importantly, the linear quantile regression model does not imply linearity of the conditional mode, and so there are no strict inclusion relations between the two assumptions; see Remark \ref{rem: linearity} ahead. 
The recent work of \cite{Chen2016} studies nonparametric kernel estimation of the conditional mode. To be precise, \cite{Chen2016} do not assume the existence of the unique global mode and  allow for multiple local modes. Extension of our approach to multiple local modes would be of interest but is beyond the scope of the present paper. \cite{Yao2012} propose a local modal regression (LMR) estimator that can be seen as a local linear estimator for the conditional mode, and establish asymptotic results analogous to those of a local linear estimator for the conditional mean. In particular, the rate of convergence of the LMR estimator is faster than that of the kernel density based estimator of \cite{Chen2016}. This is, however, due to Condition (A6) in \cite{Yao2012} that is essentially the conditional symmetry assumption on the error term (note that $h_2$ in \cite{Yao2012} is fixed) and under which the conditional mode and mean coincide. In the present paper, we assume no symmetry assumptions on the conditional distribution.

From a technical point of view, derivation of the limiting distribution of the proposed estimator is by no means trivial.
First of all, it is not a priori straightforward to foresee that the convergence rate is $(nh^{2})^{-1/3}$ and the limiting distribution is a scale transformation of Chernoff's distribution. Second, because our objective function depends on the bandwidth tending to zero as the sample size increases, our result does not follow from the general theorem, Theorem 1.1, in \cite{KimPollard1990}, which is a pioneering work on cube root asymptotic theory. The recent work of \cite{SeoOtsu2018} extends \cite{KimPollard1990} to allow the objective function to depend on the bandwidth, but some of their regularity conditions are severely restrictive or difficult to verify in our problem. Hence, we provide a separate and self-contained proof of the main theorem, Theorem \ref{thm: limiting distributions} ahead, which requires  a substantial work. See also the discussion after Theorem \ref{thm: limiting distributions}.

The rest of the paper is organized as follows. In Section \ref{sec: setup}, we state the formal setup and define the estimator. In Section \ref{sec: limiting distributions}, we derive limiting distributions of the proposed estimator and develop inference methods for it. In Section \ref{sec: numerical results}, we conduct Monte Carlo simulations to assess the finite sample performance of the proposed estimator together with the analytical and subsampling confidence intervals. In addition,  we apply the proposed estimator to predicting the net hourly electrical energy output using Combined Cycle Power Plant Data.
Section \ref{sec: conclusion} concludes. 
All the proofs are gathered in Appendix. 

\section{Setup and estimator}
\label{sec: setup}

In this paper, we are interested in estimating the conditional mode of an outcome variable $Y \in \R$ given a vector of regressors $X = (X_{1},\dots,X_{d})^{T} \in \R^d$. 
In what follows, we assume that there exists a conditional density $f(y \mid x)$ of $Y$ given $X$ that is (at least) continuous in $y$, and for each design point $x$ in the support of $X$, there exists a unique mode $m (x)$, i.e., there exists a unique maximizer of the  function  $y \mapsto f(y \mid x)$:
\[
 f(m(x) \mid x) = \max_{y \in \R} f(y \mid x). 
\]
The function $m(x)$ is called the modal function. 

We base our estimation strategy of the modal function $m (x)$ on inverting a quantile regression model.  Let $Q (\tau \mid X)$ denote the conditional $\tau$-quantile of $Y$ given $X$ for $\tau \in (0,1)$. For the notational convenience, we also write $Q_{x}(\tau) = Q(\tau \mid X=x)$. To see the link between the conditional quantile function and the modal function, we begin with observing that 
\[
s_{x}(\tau) := Q_{x}'(\tau) = \frac{\partial Q_{x}(\tau)}{\partial \tau} = \frac{1}{f(Q_{x}(\tau) \mid x)}
\]
assuming some regularity conditions that will be clarified below. Hence, defining 
\[
\tau_{x} = \argmin_{\tau \in (0,1)} s_{x}(\tau),
\]
which exists and is unique (by continuity and strict positivity of the function $y \mapsto f(y \mid x)$ around the mode $m(x)$),  
we arrive at the key identity
\[
m(x) = Q_{x} (\tau_{x}).
\]
The function $\tau \mapsto s_{x}(\tau)$ (called the ``sparsity'' function) can be estimated by a numerical differentiation of an estimator of the conditional quantile function $\tau \mapsto Q_{x}(\tau)$, and so the problem boils down to estimating the conditional quantile function. To this end, we assume a linear quantile regression model:
\begin{equation}
Q (\tau \mid X) = X^{T} \beta(\tau), \ \tau \in (0,1),
\label{eq: QR model}
\end{equation}
where $\beta (\tau) \in \R^{d}$ is an unknown slope vector for each $\tau \in (0,1)$. 

Pick any design point $x$ in the support of $X$, and consider to estimate $m(x)$. 
Let $(Y_1,X_1),\dots,(Y_n,X_n)$ be i.i.d. observations of $(Y,X)$. We estimate the slope vector $\beta (\tau)$ by
\begin{equation}
\hat{\beta}(\tau) = \argmin_{\beta \in \R^d}  \sum_{i=1}^{n} \rho_{\tau}(Y_i-X_i^{T}\beta),
\label{eq: QR problem}
\end{equation}
where $\rho_{\tau} (u) = \{ \tau - I(u \le 0) \} u$ is the check function \cite{KoenkerBassett1978}. This leads to an estimator $\hat{Q}_{x}(\tau) = x^{T}\hat{\beta}(\tau)$ of $Q_{x}(\tau)$. To estimate $s_{x}(\tau) =Q_{x}'(\tau)$, let $h = h_{n} \to 0$ be a sequence of bandwidths such that $nh^{2} \to \infty$; then we estimate $s_{x}(\tau)$ by a numerical differentiation:
\[
\hat{s}_{x}(\tau) = \frac{\hat{Q}_{x}(\tau + h) - \hat{Q}_{x}(\tau-h)}{2h}.
\]
Finally, we estimate $m(x)$ by $\hat{m}(x) = \hat{Q}_{x}(\hat{\tau}_{x}) =x^{T}\hat{\beta}(\hat{\tau}_{x})$, where $\hat{\tau}_{x}$ is an approximate minimizer of $\hat{s}_{x}(\tau)$ on $[\varepsilon,1-\varepsilon]$ with sufficiently small parameter $\varepsilon \in (0,1/2)$ chosen by users, in the sense that  
\[
\hat{s}_{x}(\hat{\tau}_{x}) \le \inf_{\tau \in [\varepsilon,1-\varepsilon]} \hat{s}_{x}(\tau) + o((nh^{2})^{-2/3}). 
\]
The objective function $\hat{s}_{x}(\tau)$ may not admit strict minimizers, and so we allow $\hat{\tau}_{x}$ to be an approximate minimizer in the above sense, which always exists. In practice, our estimator requires to choose the bandwidth $h$, which will be discussed in Section \ref{subsec: bandwidth selection}. 

Importantly,  our estimate $\hat{m}(x)$ is easy to compute even when the sample size $n$ and the dimension $d$ of $X$ are large. The quantile regression problem (\ref{eq: QR problem}) can be formulated as a linear programming problem and hence can be efficiently solved even when $n$ and $d$ are large (cf. Chapter 6 in \cite{Koenker2005}). 
Furthermore, the entire path $\tau \mapsto \hat{\beta}(\tau)$ can be computed by a parametric linear programming or discretizing the interval $(0,1)$ into fine grids. 
The minimization of $\hat{s}_{x}(\tau)$ is a one-dimensional optimization problem and can be solved by a grid search. On the other hand, the linear modal regression estimator  \cite{Lee1989, Lee1993, KempSantos2012,YaoLi2014} requires to solve a multi-dimensional  non-convex optimization problem.
For example, \cite{YaoLi2014} assume that the modal function is linear $m(x) = x^{T}\gamma$ for some $\gamma \in \R^{d}$ and propose the following estimator:
\begin{equation}
\hat{\gamma}_{YL} = \argmax_{\gamma} \sum_{i=1}^{n} \phi_{h}(Y_{i}-X_{i}^{T}\gamma),
\label{eq: YaoLi}
\end{equation}
where $\phi (y) = (2\pi)^{-1/2} e^{-y^{2}/2}$ is the density of the standard normal distribution and $\phi_{h}(y) = h^{-1} \phi(y/h)$. The optimization problem (\ref{eq: YaoLi}) is non-convex. \cite{YaoLi2014} propose an EM like algorithm for (\ref{eq: YaoLi}), but ``there is no guarantee that the algorithm will converge to the global optimal
solution'' (\cite{YaoLi2014}, p. 659).

\begin{remark}[Generality of linear quantile regression model]
\label{rem: linearity}
The linear quantile regression model (\ref{eq: QR model}) is flexible enough to cover many data generating processes. In general, if $\tau \mapsto \beta (\tau)$ is a function on $(0,1)$ such that the map $\tau \mapsto X^{T}\beta (\tau)$ is strictly increasing almost surely and $Y$ is generated as $Y = X^{T} \beta(U)$ for $U \sim U(0,1)$ independent of $X$, then the pair $(Y,X)$ satisfies the linear quantile regression model (\ref{eq: QR model}). In particular, it is worth pointing out that 
the linear quantile regression model (\ref{eq: QR model}) does not imply linearity of the modal function $m(x)$. 
For example, consider the simple case where $X=(1,X_2)^{T}$ with $X_2 \in (0,1)$ and $Y = U^{3}/3- X_2 (U-1)^{2}$ for $U \sim U(0,1)$ independent of $X$.
In this case, the pair $(Y,X)$ satisfies the model (\ref{eq: QR model}) with $\beta (\tau) = (\tau^{3}/3,-(\tau-1)^{2})^{T}$ and so $Q_{x}(\tau) = \tau^{3}/3 - (\tau-1)^{2} x_2$. Since $Q_{x}'(\tau) = \tau^{2} - 2(\tau-1)x_2$ is minimized at $\tau = x_2$,  the modal function $m(x) = Q_{x}(x_2) = -2x_{2}^{3}/3+2x_{2}^{2} - x_{2}$ is nonlinear. 
\end{remark}

\begin{remark}[Case with no regressors]
\label{rem: Chernoff}
In the simple case where there are no regressors, i.e., $X=1$, our estimator of the mode reduces to $\hat{m} = \hat{Q}(\hat{\tau})$, where $\hat{Q}(\tau) = \hat{F}^{-1}(\tau) = \inf \{ y : \hat{F}(y) \ge \tau \}$ is the empirical quantile function (with $\hat{F}(y) = n^{-1} \sum_{i=1}^{n} I(Y_{i} \le y)$ being the empirical distribution function) and 
\[
\hat{\tau} = \argmin_{\tau} \frac{\hat{Q}(\tau + h) - \hat{Q}(\tau - h)}{2h}.
\]
Our estimator $\hat{m}$ can also be  described by using order statistics $Y_{(1)} \le \cdots \le Y_{(n)}$.  
Since in general $\hat{Q}(\tau) = Y_{(\lceil n\tau \rceil)}$ where $\lceil \cdot \rceil$ is the ceiling function, our estimator $\hat{m}$ coincides with the order statistic $Y_{(\lceil n\hat{\tau} \rceil)}$ where $\hat{\tau}$  minimizes the spacing $Y_{(\lceil n(\tau+h) \rceil)} - Y_{(\lceil n(\tau-h) \rceil)}$. 

It is then clear that our estimator is (related to but) markedly different from Chernoff's \cite{Chernoff1964} estimator of  the unconditional mode of $Y$ that is defined by
\[
\hat{m}_{C} = \argmax_{y} \frac{\hat{F}(y + h) - \hat{F}(y-h)}{2h},
\]
namely, $\hat{m}_{C}$ is the point whose local neighborhood contains the most observations. 
\end{remark}

\begin{remark}[Alternative objective function]
The estimator $\hat{s}_{x}(\tau)$ of $s_{x}(\tau)$ contains a deterministic bias of order $h^{2}$ under the conditions stated in the next section. Alternatively, we may estimate $s_{x}(\tau)$ by 
\begin{equation}
\tilde{s}_{x}(\tau) = \frac{\frac{2}{3} \{ \hat{Q}_{x}(\tau+h) - \hat{Q}_{x}(\tau-h) \} - \frac{1}{12}\{\hat{Q}_{x}(\tau + 2h) - \hat{Q}_{x}(\tau-2h) \}}{h},
\label{eq: alternative objective function}
\end{equation}
which has a bias of order $h^{4}$ under additional smoothness conditions; cf. \cite{BCK2016}. In the present paper, however, we shall use a simpler objective function $\hat{s}_{x}(\tau)$.
\end{remark}

\begin{remark}[Implementation detail]
In the finite sample, $[\tau-h,\tau+h]$ may not be included in $(0,1)$ for some $\tau \in [\varepsilon,1-\varepsilon]$. To fix this, we suggest the following simple modification. Suppose that we compute $\hat{Q}_{x}(\tau)$ on  $[\tau_{\min},\tau_{\max}] \supset [\varepsilon,1-\varepsilon]$; then in practice we suggest to replace $\hat{s}_{x}(\tau)$ by 
\[
\hat{s}_{x}(\tau) = \frac{\hat{Q}_{x}(\tau + \min \{ h, \tau_{\max}-\tau \}) - \hat{Q}_{x}(\tau - \min\{ h,\tau - \tau_{\min} \})}{ \min \{ h, \tau_{\max}-\tau \} + \min \{  h, \tau - \tau_{\min} \}},
\]
which asymptotically coincides with the original definition of $\hat{s}_{x}(\tau)$ uniformly in $\tau \in [\varepsilon,1-\varepsilon]$ (as long as $(\tau_{\min},\tau_{\max}) \supset [\varepsilon,1-\varepsilon]$).
\end{remark}

\begin{remark}[Alternative specifications to the conditional quantile function]
In the present paper, we assume that the conditional quantile function is linear in $X$. 
The linear quantile regression model is the most fundamental modeling in conditional quantile estimation, and is computationally attractive since the computation of the Koenker-Bassett \cite{KoenkerBassett1978} estimate can be formulated as a linear programming problem. Indeed, the computational attractiveness is one of the main motivations to study the proposed estimator of the conditional mode. 

Having said that, we could use alternative specifications to the conditional quantile function to estimate the conditional mode. One possible alternative is a nonlinear quantile regression model $Q_{x}(\tau) = g(x, \beta(\tau))$ where $g$ is some known smooth function (the dimensions of $x$ and $\beta(\tau)$ need not be matched); see e.g. Section 4.4 of \cite{Koenker2005}. 
In this case, we can estimate $\beta(\tau)$ by 
\[
\hat{\beta}(\tau) = \argmin_{\beta} \sum_{i=1}^{n} \rho_{\tau} (Y_{i} - g(X_{i},\beta)),
\]
and thus can estimate $s_{x}(\tau) = Q_{x}'(\tau)$ by $\hat{s}_{x}(\tau) = \{ \hat{Q}_{x}(\tau+h) - \hat{Q}_{x}(\tau-h) \}/(2h)$ with $\hat{Q}_{x}(\tau) = g(x,\hat{\beta}(\tau))$. Alternatively, we can use the expression
\[
s_{x}(\tau) = \Big [ \underbrace{\frac{\partial g (x,\beta)}{\partial \beta}|_{\beta = \beta(\tau)}}_{=: g_{\beta}(x,\beta(\tau))} \Big ]^{T} \frac{d \beta(\tau)}{d \tau},
\]
and estimate $s_{x}(\tau)$ by $\hat{s}_{x}(\tau) = g_{\beta}(x,\hat{\beta}(\tau))^{T} \{ \hat{\beta}(\tau+h) - \hat{\beta}(\tau-h)\}/(2h)$.
It is known that under regularity conditions, similar asymptotic properties to those of the linear quantile regression estimator hold for the nonlinear case (cf. Section 4.4 of \cite{Koenker2005}), and hence it is natural to expect that  asymptotic results analogous to those developed in the next section can be extended to the resulting conditional mode estimator under the nonlinear quantile regression model. 

A yet alternative specification would be a semiparametric single index model $Q_{x}(\tau) = \psi (x^{T}\beta(\tau))$ where $\psi$ is some unknown function.  For given estimators $\hat{\psi}$ and $\hat{\beta}(\tau)$ of $\psi$ and  $\hat{\beta}(\tau)$, we can estimate $Q_{x}(\tau)$ and $s_{x}(\tau)$ by $\hat{Q}_{x}(\tau) = \hat{\psi}(x^{T}\hat{\beta}(\tau))$ and $\hat{s}_{x}(\tau) = \{ \hat{Q}_{x}(\tau + h) - \hat{Q}_{x}(\tau-h)\}/(2h)$, respectively. Alternatively, we can use the expression $s_{x}(\tau) = \psi'(x^{T}\beta(\tau)) d(x^{T}\beta(\tau))/d\tau$ and estimate $d(x^{T}\beta(\tau))/d\tau$ by a difference quotient. Methods to estimate the parametric and nonparametric components in the single index quantile regression model can be found in e.g. \cite{Chaudhuri1997, WuYuYu2010}, and \cite{MaHe2016}. In the single index case, the nonparametric estimation of the link function $\psi$ is involved, whose effect has to be taken into account  when considering asymptotic properties of the resulting conditional mode estimator, which would be a nontrivial challenge. 
\end{remark}

\section{Limiting distributions}
\label{sec: limiting distributions}

\subsection{Limiting distributions}

In this section, we derive  limiting distributions of $\hat{\tau}_{x}$ and $\hat{m}(x)$. To this end, we make the following assumption. Let $\mathcal{X}$ denote the support of $X$.

\begin{assumption}
\label{assumption}
In addition to the baseline assumption stated in the previous section, we assume the following conditions.
\begin{enumerate}
\item[(i)]$\E[ X_{j}^{4}] < \infty$ for all $j=1,\dots,d$. 
\item[(ii)] The matrix $\E[ XX^{T} ]$ is positive definite. 
\item[(iii)] The conditional density $f(y \mid x)$ is three times continuously differentiable with respect to $y$ for each $x \in \mathcal{X}$. Let $f^{(j)}(y \mid x) = \partial^{j}f(y \mid x)/\partial y^{j}$ for $j=0,1,2,3$, where $f^{(0)}(y \mid x) =f(y \mid x)$. There exists a constant $C$ such that $|f^{(j)}(y \mid x)| \le C$ for all $(y,x) \in \R \times \mathcal{X}$ and $j=0,1,2,3$.
\item[(iv)] There exists a positive constant $c$ (that may depend on $\varepsilon$) such that $f(y \mid x) \ge c$ for all $y \in [Q_{x}(\varepsilon/2),Q_{x}(1-\varepsilon/2)]$ and $x \in \mathcal{X}$.
\item[(v)] As $n \to \infty$, $nh^{8} \to 0$ and $nh^{5} \to \infty$.
\end{enumerate}
\end{assumption}

Conditions (i)--(iv) are more or less standard in the quantile regression literature; cf. \cite{Koenker2005}. 
In particular, they require no moment conditions on $Y$. For instance, they allow $\E[|Y|] = \infty$. 
Conditions (iii) and (iv) allow $Q_{x}(\tau)$ to be four times continuously differentiable on $(\varepsilon/2,1-\varepsilon/2)$ with 
\[
\begin{split}
&s_{x}(\tau) := Q_{x}'(\tau) =\frac{1}{f(Q_{x}(\tau) \mid x)}, \ s_{x}'(\tau) = \frac{-f^{(1)}(Q_{x}(\tau) \mid x)}{f(Q_{x}(\tau) \mid x)^{3}}, \\
&s_{x}''(\tau) =  \frac{3 f^{(1)}(Q_{x}(\tau) \mid x)^{2} -f(Q_{x}(\tau) \mid x) f^{(2)}(Q_{x}(\tau) \mid x)}{f(Q_{x}(\tau) \mid x)^{5}}.
\end{split}
\]
Condition (v) is concerned with the bandwidth. The condition $nh^{8} \to 0$ is an ``undersmoothing'' condition. The proof of Theorem \ref{thm: limiting distributions} shows that the estimator $\hat{m}(x)$ contains a deterministic bias of order $h^{2}$, while the stochastic error decreases at rate $(nh^{2})^{-1/3}$. To guarantee that $h^{2} = o((nh^{2})^{-1/3})$, we need $nh^{8} \to 0$. 

Let $\{ B(t) : t \in \R \}$ be a two-sided standard Brownian motion, i.e., a centered Gaussian process with continuous sample paths and covariance function 
\[
\E[B(t_1)B(t_2)] = \begin{cases}
t_{1} & \text{if} \ 0 \le t_{1} \le t_{2} \\
-t_{2} & \text{if} \ t_{1} \le t_{2} \le 0 \\
0 & \text{if} \ t_{1} \le 0 \le t_{2}
\end{cases}
.
\]
Such a two-sided standard Brownian motion can be constructed by generating independent standard Brownian motions $\{ W_{1}(t) : t \ge 0 \}$ and $\{ W_{2}(t) : t \ge 0 \}$, and then defining $B(t) = W_{1}(t)$ for $t \ge 0$ and $B(t) = W_{2}(-t)$ for $t < 0$. In addition, let 
\[
Z = \argmax_{t \in \R} \{ B(t) - t^{2} \},
\]
which exists and is unique almost surely by Lemmas 2.5 and 2.6 in \cite{KimPollard1990}.
The distribution of $Z$ is called \textit{Chernoff's distribution} \cite{Chernoff1964}, and numerical values of quantiles of Chernoff's distribution can be found in \cite{GroeneboomWellner2001}. 

Finally, define the matrix 
\[
J(\tau) = \E[f (X^{T}\beta(\tau) \mid X) XX^{T}],
\]
which is positive definite for every $\tau \in [\varepsilon/2,1-\varepsilon/2]$ under our assumption. 
We are now in position to state the main theorem of this paper. 

\begin{theorem}[Limiting distributions]
\label{thm: limiting distributions}
Pick any $x \in \mathcal{X}$.
Suppose that Assumption \ref{assumption} holds, and in addition that $f^{(2)}(m(x) \mid x) < 0$ and $m(x) \in (Q_{x}(\varepsilon),Q_{x}(1-\varepsilon))$ (or equivalently $\tau_{x} \in (\varepsilon,1-\varepsilon)$). 
Then we have
\begin{equation}
(nh^{2})^{1/3} (\hat{\tau}_{x} - \tau_{x}) \stackrel{d}{\to} (\sigma_{x}/v_{x})^{2/3} Z
\label{eq: first result}
\end{equation}
as $n \to \infty$, where $\sigma_{x}^{2} = \E[(x^{T}J(\tau_{x})^{-1} X)^{2}]/2, v_{x} = s_{x}''(\tau_{x})/2 = -f^{(2)}(m(x) \mid x)/\{ 2 f(m(x) \mid x)^{4} \} \ (> 0)$, and $\sigma_{x} = \sqrt{\sigma_{x}^{2}}$. In addition, we have
\begin{equation}
(nh^{2})^{1/3} (\hat{m}(x) - m(x)) \stackrel{d}{\to} s_{x}(\tau_{x})(\sigma_{x}/v_{x})^{2/3} Z.
\label{eq: second result}
\end{equation}
\end{theorem}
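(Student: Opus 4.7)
The plan is a cube-root asymptotics argument: linearize around $\tau_x$, identify the limiting distribution of $(nh^2)^{1/3}(\hat\tau_x - \tau_x)$ via an argmin continuous mapping theorem applied to a localized version of $\hat s_x$, and then derive the limit for $\hat m(x)$ from that for $\hat\tau_x$. The central object is the local process
\[
G_n(u) := (nh^2)^{2/3}\bigl[\hat s_x\bigl(\tau_x + (nh^2)^{-1/3}u\bigr) - \hat s_x(\tau_x)\bigr], \qquad u \in [-K,K],
\]
which I aim to show converges weakly to $G(u) := v_x u^2 + \sigma_x B(u)$. Brownian scaling $u = (\sigma_x/v_x)^{2/3}t$ then turns $\argmin_u G(u)$ into $(\sigma_x/v_x)^{2/3} \cdot \argmin_t\{t^2 + \tilde B(t)\} \stackrel{d}{=} (\sigma_x/v_x)^{2/3} Z$ (using $-\tilde B \stackrel{d}{=} \tilde B$ for a two-sided Brownian motion), which would yield (\ref{eq: first result}).

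The starting point is a uniform Bahadur representation for the QR estimator on $[\varepsilon/2, 1-\varepsilon/2]$,
\[
\hat\beta(\tau) - \beta(\tau) = -J(\tau)^{-1} \tfrac{1}{n}\sum_{i=1}^n \{I(Y_i \le X_i^T \beta(\tau)) - \tau\} X_i + r_n(\tau),
\]
with $\sup_\tau \|r_n(\tau)\|$ at a rate $o_p(n^{-1/2})$ up to logarithmic factors, which is standard under Assumption \ref{assumption}(i)--(iv). Substituting into $\hat s_x(\tau) = x^T[\hat\beta(\tau+h) - \hat\beta(\tau-h)]/(2h)$ decomposes $G_n$ into deterministic and stochastic pieces. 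For the deterministic piece, Taylor expansion gives $s_x(\tau_x + \eta) - s_x(\tau_x) = v_x\eta^2 + O(\eta^3)$ since $s_x'(\tau_x) = 0$ at the interior minimum, producing the $v_x u^2$ limit after rescaling. The numerical-differentiation bias contributes an additional $\tfrac{h^2}{6}[s_x''(\tau_x+\eta) - s_x''(\tau_x)]$, whose rescaled size is $O((nh^2)^{1/3}h^2|u|) = O(n^{1/3}h^{8/3}|u|)$; this is $o(1)$ precisely under $nh^8 \to 0$, which motivates the undersmoothing condition in Assumption \ref{assumption}(v).

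The heart of the argument is the stochastic piece. After replacing $J(\tau_x \pm h + \eta)^{-1}$ by $J(\tau_x)^{-1}$ (asymptotically negligible by continuity of $J$), the increment reduces to
\[
G_n^{\mathrm{sto}}(u) \approx -\frac{(nh^2)^{2/3}}{2hn}\sum_{i=1}^n (x^T J(\tau_x)^{-1} X_i)\bigl[A_i^+(u) - A_i^-(u)\bigr],
\]
where, for $u > 0$, $A_i^\pm(u)$ are the indicators of the intervals $(X_i^T\beta(\tau_x \pm h), X_i^T\beta(\tau_x + \eta \pm h)]$ with $\eta = (nh^2)^{-1/3}u$. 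The QR identity $\Prob(Y \le X^T\beta(\tau) \mid X) = \tau$ gives $\E[A_i^\pm \mid X_i] = \eta$ \emph{exactly}, so each summand is centered; moreover, when $\eta/h \to 0$, i.e., $nh^5 \to \infty$ (the lower bandwidth restriction in Assumption \ref{assumption}(v)), the supports of $A_i^+$ and $A_i^-$ are disjoint, giving $\E[(A_i^+ - A_i^-)^2 \mid X_i] = 2\eta$. A direct computation then yields $\Var(G_n^{\mathrm{sto}}(u)) \to \sigma_x^2 |u|$, and more generally a limiting covariance $\sigma_x^2 \min(|u_1|,|u_2|) I(u_1 u_2 \ge 0)$ matching $\sigma_x B$. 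Finite-dimensional convergence follows from a Lindeberg CLT for the triangular array, and tightness from a fourth-moment bound on increments. Combined with a preliminary rate $\hat\tau_x - \tau_x = O_p((nh^2)^{-1/3})$ obtained from a standard peeling argument using the uniform expansion of $\hat s_x - s_x$, the argmin continuous mapping theorem delivers (\ref{eq: first result}).

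For (\ref{eq: second result}), I would decompose
\[
\hat m(x) - m(x) = [\hat Q_x(\tau_x) - Q_x(\tau_x)] + [Q_x(\hat\tau_x) - Q_x(\tau_x)] + \bigl\{[\hat Q_x(\hat\tau_x) - Q_x(\hat\tau_x)] - [\hat Q_x(\tau_x) - Q_x(\tau_x)]\bigr\}.
\]
The first term is $O_p(n^{-1/2}) = o_p((nh^2)^{-1/3})$ since $h \to 0$; the second Taylor-expands to $s_x(\tau_x)(\hat\tau_x - \tau_x) + O_p((\hat\tau_x - \tau_x)^3)$ because $s_x'(\tau_x) = 0$; the third is $o_p((nh^2)^{-1/3})$ by stochastic equicontinuity of $\hat\beta - \beta$ over $\tau_x + O_p((nh^2)^{-1/3})$-neighborhoods, using the Bahadur remainder and an $L^2$ modulus bound. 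Multiplying by $(nh^2)^{1/3}$ and applying (\ref{eq: first result}) yields (\ref{eq: second result}). I expect the main technical obstacle to be tightness and stochastic equicontinuity for the bandwidth-dependent empirical process: as noted earlier, Kim--Pollard does not apply directly since the objective depends on $h$, and Seo--Otsu's generalization has conditions that are hard to verify here, so a self-contained chaining/maximal-inequality argument tailored to the indicator differences $A_i^\pm$ will be required.
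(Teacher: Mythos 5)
Your proposal is correct and follows essentially the same route as the paper's proof: Bahadur representation, localization of the objective with the quadratic drift $v_xu^2$ plus bias controlled by $nh^8\to 0$, covariance calculation for the rescaled indicator-difference process (relying on disjointness of the shifted windows, i.e.\ $nh^5\to\infty$) yielding $\sigma_x B$, a peeling rate argument, an argmax/argmin continuous mapping theorem in the spirit of Kim--Pollard's Theorem 2.7, and the delta method plus $\sqrt{n}$-uniform consistency of $\hat Q_x$ for the second claim. The only differences are cosmetic (the paper passes to the uniforms $U_i = F(Y_i\mid X_i)$ and phrases everything as a maximization, and its self-contained equicontinuity argument is exactly the tailored maximal-inequality/chaining step you anticipate).
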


\begin{remark}[Rates of convergence]
The rate of convergence of $\hat{m}(x)$ toward $m(x)$ is $(nh^{2})^{-1/3}$ and can be arbitrarily close to $n^{-1/4}$ under Condition (v), which is independent of the dimension $d$ of the regressor vector. 
The $n^{-1/4}$ rate is likely to be suboptimal from a minimax point of view since it is known that when $X=1$, the minimax rate of estimating the mode under three time differentiability of the underlying density is $n^{-2/7}$; see Theorem 3.1 in \cite{Romano1988}. 
It is worth noting that if we use the alternative objective function $\tilde{s}_{x}(\tau)$ in (\ref{eq: alternative objective function}), the bias of the resulting estimator $\hat{m}(x)$ is reduced to $O(h^{4})$ (this however requires additional smoothness conditions on the conditional density), and therefore the rate of convergence can be arbitrarily close to $n^{-2/7}$. 
\end{remark}

\begin{remark}[Case with no regressors]
In the simple case where there are no regressors, i.e., $X=1$, the limiting distribution of our estimator $\hat{m}$ is as follows. Let $f$ denote the density of $Y$ with mode $m$; then 
$(nh^{2})^{1/3} (\hat{m} - m) \stackrel{d}{\to} \{ 2f(m)^{3}/f''(m)^{2} \}^{1/3} Z$.
In contrast, the limiting distribution of Chernoff's mode estimator (see Remark \ref{rem: Chernoff}) is $(nh^{2})^{1/3} (\hat{m}_{C} - m) \stackrel{d}{\to} \{ 2f(m)/f''(m)^{2} \}^{1/3} Z$, which is slightly different from our limiting distribution. It is worth mentioning that Chernoff's derivation of the preceding limiting distribution in \cite{Chernoff1964} is only heuristic, but can be made rigorous (under regularity conditions) by mimicking the proof of Theorem \ref{thm: limiting distributions}. 
\end{remark}

Interestingly, despite the presence of regressors, the limiting distribution of our estimator $\hat{m}(x)$ is a scale transformation of Chernoff's distribution, which is in contrast to e.g. Manski's  maximum score \cite{Manski1975} whose limiting distribution is given by a maximizer of a Gaussian process  whose covariance function  depends on the distribution of regressors; see Example 6.4 in \cite{KimPollard1990}. 
The fact that the limiting distribution is a scale transformation of Chernoff's distribution makes inference for our estimator $\hat{m}(x)$  relatively simple. 
Namely, an asymptotic confidence interval can be constructed by consistently estimating the constant $s_{x}(\tau_{x})(\sigma_{x}/v_{x})^{2/3}$, which will be discussed in the next section.

The main part of the proof is the proof of the first result (\ref{eq: first result}). 
The second result (\ref{eq: second result}) follows from the $\sqrt{n}$-uniform consistency of the quantile regression estimator and the delta method. 
To prove the first result (\ref{eq: first result}), we begin with expanding the objective function $\hat{s}_{x}(\tau)$ and showing that $\hat{\tau}_{x}$ is an approximate minimizer of the sample average of kernel functions with a uniform kernel; see (\ref{eq: optimality condition}) in the proof.
Since those kernel functions depend on the sample size $n$ via the bandwidth $h = h_n$, 
the result (\ref{eq: first result}) does not follow from the general theorem, Theorem 1.1, in  \cite{KimPollard1990}, which is a pioneering work on cube root asymptotic theory. Theorem 1.1 in \cite{KimPollard1990} covers the case where the objective function is the sample average of functions that do not depend on $n$ and the estimator is $n^{1/3}$-consistent, but its proof does not carry over to our case (cf. the second paragraph in page 192 of \cite{KimPollard1990}). The recent work of \cite{SeoOtsu2018} extends the results of \cite{KimPollard1990} to allow the objective function to depend on the bandwidth (and the data to be dependent), but some of their assumptions are severely restrictive or difficult to verify in our problem. Specifically, Assumption M (i) in \cite{SeoOtsu2018} requires $h_n f_{n,\theta}$ (in their notation) to be uniformly bounded, which in our problem requires the regressor vector $X$ to be bounded (recall that we only assume each coordinate of $X$ to have finite fourth moment); and we (the authors) found that Assumption M (ii) is difficult to verify in our problem. 
Hence, instead of checking the assumptions of \cite{SeoOtsu2018}, we provide a separate and self-contained proof of the result (\ref{eq: first result}), which requires a substantial work.  
Specifically, we show that the ``rescaled'' objective function for which the rescaled estimator $\hat{t} = (nh^{2})^{1/3}(\hat{\tau}_{x}-\tau_{x})$ is an approximate maximizer converges weakly to the process $\{ \sigma_{x} B(t) - v_{x}t^{2} : t \in \R \}$ in the space of locally bounded functions on $\R$, and apply Theorem 2.7 in \cite{KimPollard1990} to conclude that the approximate maximizer $\hat{t} = (nh^{2})^{1/3}(\hat{\tau}_{x}-\tau_{x})$ converges weakly to $\argmax_{t \in \R} \{ \sigma_{x} B(t) - v_{x}t^{2} \}$, which is shown to be equal in distribution to $(\sigma_{x}/v_{x})^{2/3} Z$; see Step 5 of the proof.

Next, we consider a multivariate limit theorem for the proposed estimator. Let $x^{1},\dots,x^{L} \in \mathcal{X}$ be a finite number of design points with $L$ independent of $n$, and let 
\[
\tau_{(1)} > \tau_{(2)} > \cdots > \tau_{(M)}
\]
denote the distinct values of  $\tau_{x^{1}},\dots,\tau_{x^{L}}$. Set $S_{k} = \{ j \in \{ 1,\dots, L \} : \tau_{x^{j}} = \tau_{(k)} \} $ with $s_{k} = \Card (S_{k})$ for $k=1,\dots,M$. For each $k=1,\dots,M$, let $\{ \mathbb{B}_{k}((t_{j})_{j \in S_{k}}) : (t_{j})_{j \in S_{k}} \in \R^{s_{k}} \}$ denote a centered Gaussian process with covariance function 
\[
\Cov \left ( \mathbb{B}_{k}((t_{i})_{i \in S_{k}}),\mathbb{B}_{k}((t_{j}')_{j \in S_{k}}) \right) = \frac{1}{2} \sum_{i,j \in S_{k}} (x^{i})^{T} J(\tau_{(k)})^{-1} \E[XX^{T}] J(\tau_{(k)})^{-1} x^{j} \E[B(t_{i})B(t_{j}')].
\]
We note that the construction of the  $\mathbb{B}_{k}$-process depends on the design points $x^{1}.\dots,x^{L}$. 
Recall that a version of a stochastic process is another process with the same finite dimensional distributions. 

\begin{corollary}
\label{cor: multivariate}
Suppose that Assumption \ref{assumption} holds, and in addition that $f^{(2)}(m(x) \mid x) < 0$ and $m(x) \in (Q_{x}(\varepsilon),Q_{x}(1-\varepsilon))$ for all $x \in \{ x^{1},\dots,x^{L} \}$. Then, for each $k=1,\dots,M$, there exists a version of the $\mathbb{B}_{k}$-process with continuous paths, and denoting the continuous version by the same symbol $\mathbb{B}_{k}$, we have
\[
(nh^{2})^{1/3} ( \hat{\tau}_{x^{1}},\dots,\hat{\tau}_{x^{L}})^{T} \stackrel{d}{\to} (W_{1},\dots,W_{L})^{T}
\]
as $n \to \infty$, where $(W_{j})_{j \in S_{k}}, k=1,\dots,M$ are independent, and for each $k=1,\dots,M$, 
\[
(W_{j})_{j \in S_{k}} \stackrel{d}{=} \argmax_{(t_{j})_{j \in S_{k}} \in \R^{s_{k}}} \left \{ \mathbb{B}_{k}((t_{j})_{j \in S_{k}})-\sum_{j \in S_{k}} v_{x^{j}} t_{j}^{2} \right \}.
\]
In addition, we have
\[
(nh^{2})^{1/3} \left ( \hat{m}(x^{1}) - m(x^{1}),\dots,\hat{m}(x^{L}) - m(x^{L}) \right )^{T}  \stackrel{d}{\to} \left(s_{x^{1}}(\tau_{x^{1}})W_{1},\dots,s_{x^{L}}(\tau_{x^{L}}) W_{L} \right)^{T}.
\]
\end{corollary}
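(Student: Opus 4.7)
The plan is to lift the argument behind Theorem \ref{thm: limiting distributions} from one design point to the joint scale, by running the same optimality reduction in parallel for $j = 1, \dots, L$ and then combining the rescaled objectives via joint weak convergence. For each $j$, the proof of Theorem \ref{thm: limiting distributions} shows that $\hat{t}_{j} := (nh^{2})^{1/3}(\hat{\tau}_{x^j} - \tau_{x^j})$ is an approximate maximizer, up to an $o_{P}(1)$ error, of the rescaled process
\[
\hat{Z}_{n,j}(t_j) = (nh^{2})^{2/3}\bigl\{\hat{s}_{x^j}(\tau_{x^j}) - \hat{s}_{x^j}\bigl(\tau_{x^j} + t_j (nh^{2})^{-1/3}\bigr)\bigr\}.
\]
It therefore suffices to upgrade the weak convergence of each $\hat{Z}_{n,j}$ to joint weak convergence of $(\hat{Z}_{n,1}, \dots, \hat{Z}_{n,L})$ on locally bounded functions to a limit whose coordinatewise argmax is a.s.\ unique, and then to apply Theorem 2.7 of \cite{KimPollard1990} coordinatewise.

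To identify the limit, I would reuse the Bahadur-type linearization
\[
\hat{\beta}(\tau) - \beta(\tau) = -J(\tau)^{-1} n^{-1} \sum_{i=1}^{n} X_{i}\bigl\{\tau - I(Y_i \le X_i^{T}\beta(\tau))\bigr\} + o_{P}(n^{-1/2}),
\]
uniform in a shrinking neighborhood of each $\tau_{(k)}$, which drives Theorem \ref{thm: limiting distributions}. Plugging this into $\hat{s}_{x^j}(\tau) = x^{j,T}(\hat{\beta}(\tau+h) - \hat{\beta}(\tau-h))/(2h)$ and computing joint finite-dimensional covariances, for $i, j \in S_k$ the cross-covariance of the scaled stochastic parts factors into the bilinear form $(x^i)^{T} J(\tau_{(k)})^{-1} \E[XX^{T}] J(\tau_{(k)})^{-1} x^{j}$ times an increment kernel that, under the $(nh^{2})^{2/3}$ scaling, matches $\E[B(t_i)B(t_j')]$; this recovers the stated covariance of $\mathbb{B}_k$. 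The deterministic bias of $\hat{s}_{x^j}$, expanded via $s_{x^j}(\tau_{x^j}+\delta) = s_{x^j}(\tau_{x^j}) + v_{x^j}\delta^{2} + O(\delta^{3})$, produces the quadratic drift $v_{x^j} t_j^{2}$ group by group, and the $O(h^{2})$ bias contribution in $\hat{s}_{x^j}$ is $o((nh^{2})^{-2/3})$ under the undersmoothing condition $nh^{8} \to 0$.

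The key new step is the across-group asymptotic independence. Since $\tau_{(1)}, \dots, \tau_{(M)}$ are fixed and distinct while $h \to 0$, the intervals $[\tau_{(k)} - 2h, \tau_{(k)} + 2h]$ are eventually pairwise disjoint. For $i \in S_k$ and $j \in S_{k'}$ with $k \ne k'$, the linearized stochastic parts of $\hat{Z}_{n,i}$ and $\hat{Z}_{n,j}$ involve increments of the quantile-score process $\tau \mapsto n^{-1}\sum_{l} X_l\{\tau - I(Y_l \le X_l^T\beta(\tau))\}$ over disjoint $\tau$-windows; the increments of this process have covariance of order $(b-a)(d-c)/n$ on disjoint intervals $(a,b), (c,d)$, so after the $(nh^{2})^{2/3}/h^{2}$ rescaling the cross-covariance is of order $(nh^{2})^{2/3}/n = n^{-1/3}h^{4/3} \to 0$. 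Together with the within-group computation, this yields finite-dimensional convergence to a limit whose components across $k$ are independent. Tightness on compacts in each coordinate follows from the modulus estimate already established in the proof of Theorem \ref{thm: limiting distributions}, and Kolmogorov's criterion gives a jointly continuous modification of $\mathbb{B}_k$ since its covariance factors through the Brownian kernel in $(t_i, t_j')$.

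Within each group, the limit $\mathbb{B}_k((t_j)_{j \in S_k}) - \sum_{j \in S_k} v_{x^j} t_j^2$ has an a.s.\ unique argmax: each marginal is, up to a Gaussian scale, a two-sided Brownian motion with negative quadratic drift whose argmax is a.s.\ unique by Lemmas 2.5--2.6 of \cite{KimPollard1990}, and uniqueness transfers to the joint argmax by a standard slicing argument. Applying Theorem 2.7 of \cite{KimPollard1990} jointly in all $j$ then delivers the stated convergence of $(\hat{\tau}_{x^j})_{j=1}^{L}$. The claim for $\hat{m}(x^j)$ follows by the same delta-method step as in Theorem \ref{thm: limiting distributions}: a Taylor expansion of $Q_{x^j}$ combined with the $\sqrt{n}$-uniform consistency of $\hat{\beta}(\tau)$ gives $\hat{m}(x^j) - m(x^j) = s_{x^j}(\tau_{x^j})(\hat{\tau}_{x^j} - \tau_{x^j}) + o_{P}((nh^{2})^{-1/3})$ coordinatewise, with deterministic scalings $s_{x^j}(\tau_{x^j})$. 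The main obstacle I expect is precisely the across-group decoupling: one must carefully show that the cross-covariance of two scaled numerical derivatives anchored at distinct quantile levels vanishes at the correct rate, which is where the disjointness of the bandwidth windows in $\tau$, combined with the covariance structure of the quantile-score empirical process, is essential.
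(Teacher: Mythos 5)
Your proposal is correct and follows essentially the same route as the paper: Bahadur linearization, joint weak convergence of the rescaled empirical processes in $\ell^{\infty}_{\mathrm{loc}}$, within-group covariances giving $\mathbb{B}_{k}$, across-group decoupling from the eventual disjointness of the bandwidth windows, the Kim--Pollard argmax theorem, and the delta method. Two small points of precision: the paper invokes Theorem 2.7 of \cite{KimPollard1990} once on $\R^{L}$ by summing the (separable) rescaled objectives $\sum_{j}\check{g}_{n,x^{j},t_{j}}$ rather than ``coordinatewise'' (which alone would give only marginal limits), and the cross-group covariance is exactly zero for large $n$ because the kernel differences have disjoint supports and mean zero, so no rate computation is needed.
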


In the special case when $\tau_{x^{1}},\dots,\tau_{x^{L}}$ are all distinct, we have
\[
\begin{split}
&(nh^{2})^{1/3} \left ( \hat{m}(x^{1}) - m(x^{1}),\dots,\hat{m}(x^{L}) - m(x^{L}) \right )^{T} \\
&\quad \stackrel{d}{\to} \left(s_{x^{1}}(\tau_{x^{1}})(\sigma_{x^{1}}/v_{x^{1}})^{2/3} Z_{1},\dots,s_{x^{L}}(\tau_{x^{L}})(\sigma_{x^{L}}/v_{x^{L}})^{2/3} Z_{L} \right)^{T},
\end{split}
\]
where $Z_{1},\dots,Z_{L}$ are independent Chernoff random variables. 

Corollary \ref{cor: multivariate} implies that 
\[
(nh^{2})^{1/3} \max_{1 \le j \le L} | \hat{m}(x^{j}) - m(x^{j}) | \stackrel{d}{\to} \max_{1 \le j \le L} | s_{x^{j}}(\tau_{x^{j}}) W_{j}|,
\]
which can be used to construct simultaneous confidence intervals for $m(x)$ over the design points $x^{1},\dots,x^{L}$; see Remark \ref{rem: simultaneous interval} ahead. 

\begin{remark}[Uniform rate over expanding sets of design points] 
\label{rem: uniform rate}
It is of interest to study the rate of convergence and limiting distribution of the $L^{\infty}$-distance between the proposed estimator and the true modal function on a continuum set of design points or expanding sets of design points, since e.g. such limiting distribution enables us to construct simultaneous confidence bands. To the best of our knowledge, however, much less is known about the rate of convergence and (especially) limiting distribution for the $L^{\infty}$-distance in nonstandard nonparametric estimation problems than standard nonparametric estimation problems with Gaussian limits, and we believe that the problem is challenging. One exception is the work of \cite{Durot2012}, which derives the uniform rate of convergence and the limiting distribution of  the $L^{\infty}$-distance for the Grenander \cite{Grenander1956} estimator (precisely speaking \cite{Durot2012} cover more general Grenander-\textit{type} estimators); see also the recent review article by \cite{Durot2018}. Their argument depends substantially on the specific construction of the Grenander estimator and can not be directly extended to our estimator. It is thus beyond the scope of this paper to formally study the uniform rate and the limiting distribution of the $L^{\infty}$-distance to our estimator, but we will give some heuristic discussion on this question, which we believe is of some interest to the reader. 

To simplify the question, we confine ourselves to the maximum distance on expanding sets of design points $x^{1},\dots,x^{L}$ with $L=L_{n} \to \infty$. Suppose in addition that $\tau_{x^{1}},\dots,\tau_{x^{L_{n}}}$ are all distinct.
Then by Corollary \ref{cor: multivariate} it is expected that  $\max_{1 \le j \le L_{n}} (nh^{2})^{1/3}|\hat{m}(x^{j}) - m(x^{j})|/\{ s_{x^{j}}(\tau_{x^{j}})(\sigma_{x^{j}}/v_{x^{j}})^{2/3}\}$ could be approximated by $\max_{1 \le j \le L_{n}} | Z_{j} | =: |Z|_{(L_{n})}$ as long as $L_{n} \to \infty$ sufficiently slowly. In Appendix \ref{sec: Gumbel}, we will show that, for the norming constants 
\[
\begin{split}
a_{L_{n}} &=  3 \left ( \frac{2}{3} \right )^{1/3} (\log L_{n})^{2/3}, \\
b_{L_{n}}' &=  \left(\frac{3}{2}\log L_{n} \right) ^{1/3} - \frac{1}{a_{L_{n}}} \left [\kappa \left ( \frac{3}{2} \log L_{n} \right)^{1/3} + \frac{1}{3} \log \log L_{n} + \frac{1}{3} \log \frac{3}{2} - \log (2\lambda)  \right ],
\end{split}
\]
where $\lambda$ and $\kappa$ are positive constants (see Appendix \ref{sec: Gumbel}), we have 
\[
a_{L_{n}} (|Z|_{(L_{n})} - b_{L_{n}}') \stackrel{d}{\to} \underbrace{\Lambda}_{\text{Gumbel distribution}}.
\]
In particular, $|Z|_{(L_{n})} = b_{L_{n}}'/a_{L_{n}} + O_{\Prob}(1/a_{L_{n}}) = O_{\Prob} ((\log L_{n})^{1/3})$, and as long as $L_{n}$ grows at most polynomially fast in $n$, $|Z|_{(L_{n})} = O_{\Prob} ((\log n)^{1/3})$. This suggests that the uniform rate of the proposed estimator would be $O_{\Prob} ((nh^{2}/\log L_{n})^{-1/3})$ and the maximum distance would converge in distribution to the Gumbel distribution after normalization. The preceding argument is heuristic since Corollary \ref{cor: multivariate} only holds with fixed $L$ (and extending the corollary to the case where $L = L_{n} \to \infty$ is a substantial technical challenge), and the rigorous result is left to future research. 
\end{remark}

\subsection{Inference} 
\label{subsec: inference}

\subsubsection{Analytical confidence intervals}
Theorem \ref{thm: limiting distributions} allows us to construct pointwise confidence intervals for $m(x)$ by consistently estimating the nuisance parameters $\sigma_{x}^{2},v_{x}$, and $s_{x}(\tau_{x})$.  

The parameter $s_{x}(\tau_{x})$ can be estimated by $\hat{s}_{x}(\hat{\tau}_{x})$. 
Next, consider to  estimate $\sigma_{x}^{2}$. For the notational convenience, let $\Sigma = \E[XX^{T}]$ and so  $\sigma_{x}^{2} =x^{T} J(\tau_{x})^{-1} \Sigma J(\tau_{x})^{-1} x/2$.
The matrices  $\Sigma$ and $J(\tau)$ can be estimated by 
\[
\hat{\Sigma} = \frac{1}{n}\sum_{i=1}^{n}X_{i}X_{i}^{T} \quad \text{and} \quad 
\hat{J}(\tau) = \frac{1}{2nh} \sum_{i=1}^{n} I(|Y_{i} - X_{i}^{T}\hat{\beta}(\tau)| \le h) X_{i}X_{i}^{T},
\]
respectively, so that we can estimate $\sigma_{x}^{2}$ by 
\[
\hat{\sigma}_{x}^{2} = \frac{1}{2}x^{T}\hat{J}(\hat{\tau}_{x})^{-1}\hat{\Sigma}\hat{J}(\hat{\tau}_{x})^{-1}x,
\]
where $\hat{J}(\tau)$ is Powell's kernel estimator \cite{Powell1986}. 
Finally, consider to estimate $v_{x} = s_{x}''(\tau_{x})/2$. To this end, we estimate $s_{x}''(\tau) = Q_{x}'''(\tau)$ by a numerical differentiation of $\hat{Q}_{x}(\tau)$. 
Namely, define the operator $\Delta_{h}$ by $\Delta_{h} g(\tau) = (g(\tau+h)-g(\tau-h))/(2h)$, and $\Delta_{h}^{j} g = \Delta_{h} (\Delta_{h}^{j-1} g)$ recursively for $j=2,3,\dots$. 
Then we can estimate $v_{x}$ by 
\begin{equation}
\hat{v}_{x} = \frac{1}{2}\Delta_{h}^{3} \hat{Q}_{x}(\hat{\tau}_{x}). 
\label{eq: third derivative}
\end{equation}
The bandwidths used in $\hat{J}(\tau)$ and $\hat{v}_{x}$ can be different from that for $\hat{\tau}_{x}$. 
See Remark \ref{rem: estimation of v} ahead for alternative estimators for $v_{x}$. 
The following proposition shows that these estimators are indeed consistent under almost the  same conditions as in Theorem \ref{thm: limiting distributions}.

\begin{proposition}[Consistency of estimators for nuisance parameters]
\label{prop: inference}
Suppose that the conditions of Theorem \ref{thm: limiting distributions} hold and  in addition that $nh^{5}/\log n \to \infty$. Then we have 
$\hat{\sigma}_{x}^{2} \stackrel{\Prob}{\to} \sigma_{x}^{2}, \ \hat{v}_{x} \stackrel{\Prob}{\to} v_{x}$, and $\hat{s}_{x}(\hat{\tau}_{x}) \stackrel{\Prob}{\to} s_{x}(\tau_{x})$
as $n \to \infty$.
\end{proposition}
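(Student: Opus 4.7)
The plan is to establish each of the three consistency claims by combining $\sqrt{n}$-uniform consistency of the quantile regression estimator with continuity and the consistency of $\hat{\tau}_{x}$ obtained in Theorem~\ref{thm: limiting distributions}. Under Assumption~\ref{assumption}, standard results (Chapter~4 of \cite{Koenker2005}) give
\[
\sup_{\tau \in [\varepsilon/4,\,1-\varepsilon/4]}\|\hat{\beta}(\tau)-\beta(\tau)\| = O_{\Prob}(n^{-1/2})
\]
together with the Bahadur representation
\[
\hat{\beta}(\tau) - \beta(\tau) = J(\tau)^{-1} n^{-1} \sum_{i=1}^{n} X_{i}\{\tau - I(Y_{i} \le X_{i}^{T} \beta(\tau))\} + R_{n}(\tau), \quad \sup_{\tau}\|R_{n}(\tau)\| = o_{\Prob}(n^{-1/2}),
\]
which I would use throughout.

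For $\hat{s}_{x}(\hat{\tau}_{x})$, Assumption~\ref{assumption}(iii)--(iv) makes $Q_{x}$ four times continuously differentiable on $(\varepsilon/2,1-\varepsilon/2)$, so a Taylor expansion gives $\{Q_{x}(\tau+h)-Q_{x}(\tau-h)\}/(2h) = s_{x}(\tau)+O(h^{2})$ uniformly on $[\varepsilon,1-\varepsilon]$; combined with the bound above one obtains $\sup_{\tau \in [\varepsilon,1-\varepsilon]}|\hat{s}_{x}(\tau)-s_{x}(\tau)| = O_{\Prob}((nh^{2})^{-1/2}) + O(h^{2}) = o_{\Prob}(1)$, and continuity of $s_{x}$ at $\tau_{x}$ together with $\hat{\tau}_{x}\stackrel{\Prob}{\to}\tau_{x}$ yields the claim. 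For $\hat{\sigma}_{x}^{2}$, the law of large numbers gives $\hat{\Sigma}\stackrel{\Prob}{\to}\Sigma$ under Assumption~\ref{assumption}(i). Writing $\hat{J}(\tau) = \tilde{J}(\tau) + \{\hat{J}(\tau) - \tilde{J}(\tau)\}$ with the oracle $\tilde{J}(\tau) = (2nh)^{-1}\sum_{i} I(|Y_{i} - X_{i}^{T}\beta(\tau)|\le h)X_{i}X_{i}^{T}$, a Bernstein-type maximal inequality combined with Assumption~\ref{assumption}(iii) and $nh/\log n\to\infty$ gives $\sup_{\tau}\|\tilde{J}(\tau)-J(\tau)\|= o_{\Prob}(1)$, while $\sup_{\tau}\|\hat{J}(\tau)-\tilde{J}(\tau)\|= o_{\Prob}(1)$ follows from the uniform consistency of $\hat{\beta}(\tau)$ and the bounded-density assumption. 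Positive definiteness of $J(\tau_{x})$ and the continuous mapping theorem then give $\hat{J}(\hat{\tau}_{x})^{-1}\stackrel{\Prob}{\to}J(\tau_{x})^{-1}$, whence $\hat{\sigma}_{x}^{2}\stackrel{\Prob}{\to}\sigma_{x}^{2}$.

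The main obstacle is $\hat{v}_{x}\stackrel{\Prob}{\to}v_{x}$. I would decompose
\[
2(\hat{v}_{x} - v_{x}) = \{\Delta_{h}^{3}\hat{Q}_{x}(\hat{\tau}_{x}) - \Delta_{h}^{3}Q_{x}(\hat{\tau}_{x})\} + \{\Delta_{h}^{3}Q_{x}(\hat{\tau}_{x}) - Q_{x}'''(\hat{\tau}_{x})\} + \{Q_{x}'''(\hat{\tau}_{x}) - Q_{x}'''(\tau_{x})\}.
\]
The last term is $o_{\Prob}(1)$ by continuity of $Q_{x}'''$; the middle (deterministic) term is $o(1)$ uniformly via the closed form $\Delta_{h}^{3}g(\tau) = \{g(\tau+3h) - 3g(\tau+h) + 3g(\tau-h) - g(\tau-3h)\}/(8h^{3})$ and a Peano-remainder Taylor expansion using $Q_{x}\in C^{4}$. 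The first (stochastic) term is the crux: substituting the Bahadur representation, the leading part is an $(8h^{3}n)^{-1}$-scaled antisymmetric linear combination of four centered empirical scores at $\tau\pm h, \tau\pm 3h$, whose $(+1,-3,+3,-1)$ sign pattern forces an $O(h)$ cancellation in the indicator-based increments, so the combined variance is $O(h/n)$ rather than the trivial $O(1/n)$; dividing by $h^{3}$ yields variance $O((nh^{5})^{-1})$. A Bernstein-type maximal inequality over the associated localized empirical process class upgrades the pointwise bound to $\sup_{\tau}|\Delta_{h}^{3}\hat{Q}_{x}(\tau)-\Delta_{h}^{3}Q_{x}(\tau)| = O_{\Prob}(\sqrt{\log n/(nh^{5})}) = o_{\Prob}(1)$ exactly under the hypothesis $nh^{5}/\log n\to\infty$, while the Bahadur remainder contributes $o_{\Prob}(n^{-1/2}h^{-3})$, negligible under Assumption~\ref{assumption}(v). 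The technical difficulty is precisely this cancellation: a naive triangle-inequality bound that ignores the antisymmetric structure would give variance $O((nh^{6})^{-1})$ and fail somewhere in the admissible bandwidth range, so the condition $nh^{5}/\log n\to\infty$ emerges sharply from a careful covariance calculation for the Gaussian limit of the quantile regression process (or, equivalently, a stochastic equicontinuity argument on the third-order-differenced score).
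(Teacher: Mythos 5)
Your treatment of $\hat{s}_{x}(\hat{\tau}_{x})$ and $\hat{\sigma}_{x}^{2}$ is fine and essentially the paper's argument (the paper outsources the uniform consistency of $\hat{J}(\tau)$ to Section A.4 of Angrist--Chernozhukov--Fern\'andez-Val). For the leading stochastic term of $\hat{v}_{x}$ you take a genuinely different route: you compute the variance of the third-differenced score directly, exploiting that the $(+1,-3,+3,-1)$ weights sum to zero so the combined indicator is supported on an interval of length $6h$, giving variance $O(h/n)$ and hence $O((nh^{5})^{-1})$ after dividing by $(8h^{3})^{2}$. The paper instead reuses the already-established uniform bound $\hat{s}_{x}(\tau)=\Delta_{h}Q_{x}(\tau)+O_{\Prob}((nh)^{-1/2}\sqrt{\log n})$ (its display (\ref{eq: rate sparsity})) together with the identity $\Delta_{h}^{3}\hat{Q}_{x}=\Delta_{h}^{2}\hat{s}_{x}$ and a crude division by $h^{2}$; both routes give the same rate $O_{\Prob}(\sqrt{\log n/(nh^{5})})$, so this part of your argument is sound.

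The genuine gap is your handling of the Bahadur remainder. You state $\sup_{\tau}\|R_{n}(\tau)\|=o_{\Prob}(n^{-1/2})$ and then assert that its contribution $o_{\Prob}(n^{-1/2}h^{-3})=o_{\Prob}((nh^{6})^{-1/2})$ is ``negligible under Assumption \ref{assumption}(v).'' It is not: Assumption \ref{assumption}(v) and $nh^{5}/\log n\to\infty$ do not force $nh^{6}$ to stay bounded away from zero (take $h\asymp n^{-1/5}\log n$, for which $nh^{5}/\log n\to\infty$, $nh^{8}\to 0$, but $nh^{6}\to 0$), so $o_{\Prob}(n^{-1/2}h^{-3})$ need not be $o_{\Prob}(1)$. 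This is exactly the same $(nh^{6})^{-1/2}$ order that you yourself identify as failing ``somewhere in the admissible bandwidth range'' for the naive variance bound, so the claim is internally inconsistent. What is needed is the sharper uniform remainder $\sup_{\tau\in[\varepsilon/2,1-\varepsilon/2]}\|R_{n}(\tau)\|=o_{\Prob}(n^{-3/4}\log n)$, which the paper proves in Lemma \ref{lem: Bahadur} via a dedicated empirical-process argument (a VC-class maximal inequality applied to the recentered indicator differences, together with the Karush--Kuhn--Tucker bound on the subgradient); with that rate, $n^{-3/4}h^{-3}\log n\to 0$ does follow from $nh^{5}/\log n\to\infty$ and $nh^{8}\to 0$. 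Without proving (or correctly citing) this sharper remainder, your proof of $\hat{v}_{x}\stackrel{\Prob}{\to}v_{x}$ does not close.
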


Now, since Chernoff's distribution is symmetric about the origin, an asymptotic $(1-\alpha)$-confidence interval for $m(x)$ is given by 
\[
\left [ \hat{m}(x) \pm \frac{\hat{s}_{x}(\hat{\tau}_{x}) (\hat{\sigma}_{x}/\hat{v}_{x})^{2/3}}{(nh^{2})^{1/3}} q_{1-\alpha/2} \right],
\]
where $q_{1-\alpha/2}$ is the $(1-\alpha/2)$-quantile of Chernoff's distribution. 
For example, Table 2 in \cite{GroeneboomWellner2001} yields that $q_{0.975} \approx 0.998181$. 

\begin{remark}[Alternative estimators for $v_{x}$]
\label{rem: estimation of v}
Alternatively to the estimator $\hat{v}_{x}$, we may use 
\[
\tilde{v}_{x} = \frac{1}{2h^{3}} \left (  \hat{Q}_{x}(\hat{\tau}_{x}+2h) - \hat{Q}_{x}(\hat{\tau}_{x} -2h) - 2\{\hat{Q}_{x}(\hat{\tau}_{x}+h) - \hat{Q}_{x}(\hat{\tau}_{x} -h) \} \right ),
\]
which is consistent under additional smoothness conditions on the conditional density. 

Still, higher order numerical differentials tend to be unstable in the finite sample. 
Instead, we may use the expression $v_{x} = -f^{(2)}(m(x) \mid x)s(\tau_{x})^{4}/2$, and estimate $f^{(2)}(m(x) \mid x)$ by a kernel method. 
Suppose that $X$ is decomposed as $X= (X^{C},X^{D})$ where $X^{C} \in \R^{d_{C}}$ is continuous and $X^{D} \in \R^{d-d_{C}}$ is discrete.  Let $\mathbb{K}_1: \R \to \R$ and $\mathbb{K}_2: \R^{d_{C}} \to \R$ be kernel functions (i.e., functions that integrate to $1$) where $\mathbb{K}_1$ is twice differentiable. 
For  given bandwidths $b_{X} =b_{X,n} \to 0$ and $b_{Y} = b_{Y,n} \to 0$, we may estimate $f^{(2)}(m(x) \mid x)$ with $x = (x^{C},x^{D})$ by 
\[
\hat{f}^{(2)} (\hat{m}(x) \mid x) = \frac{(nb_{Y}^{3}b_{X}^{d_{C}})^{-1}\sum_{i=1}^{n} \mathbb{K}_1''((\hat{m}(x)-Y_{i})/b_{Y}) \mathbb{K}_2((x^{C}-X_{i}^{C})/b_{X})I(X_{i}^{D}=x^{D})}{(nb_{X}^{d_{C}})^{-1} \sum_{i=1}^{n} \mathbb{K}_2((x^{C}-X_{i}^{C})/b_{X})I(X_{i}^{D} = x^{D})},
\]
which is consistent under appropriate conditions. This leads to an alternative estimator for $v_{x}$: 
\begin{equation}
\check{v}_{x} = -\hat{f}^{(2)}(\hat{m}(x) \mid x) \hat{s}(\hat{\tau}_{x})^{4}/2.
\label{eq: kernel estimation}
\end{equation}
In the simulation study, we use the kernel-based estimator $\check{v}_{x}$ for $v_{x}$. 
\end{remark}

\subsubsection{Subsampling}

It is known that the nonparametric bootstrap in general fails to be consistent for $n^{1/3}$-consistent estimators (cf. \cite{AbrevayaHuang2005, LegerMacGibbon2006, Kosorok2008, Sen2010}) and so it is unlikely that the bootstrap would be consistent for our estimator $\hat{m}(x)$. Instead, since the limiting distribution is a scale transformation of Chernoff's distribution that is absolutely continuous, the subsampling 
provides a valid inference method for our estimator $\hat{m}(x)$; see \cite{PolitisRomano1994,Politis1999}. 
Let $\hat{m} (x) = \hat{m}_{n}(x) = \hat{m}_{n}(x;(Y_{1},X_{1}),\dots,(Y_{n},X_{n}))$ and $h=h_n$, and let $W_{1},\dots,W_{N}$ be the $N=\binom{n}{\ell}$ subsets of $\{ (Y_{1},X_{1}),\dots,(Y_{n},X_{n}) \}$ of size $\ell \ (<n)$. Consider the subsampling distribution
\begin{equation}
U_{n,\ell}(x;t) = \frac{1}{N} \sum_{i=1}^{N} I\left \{ (\ell h_{\ell}^{2})^{1/3}(\hat{m}_{\ell}(x;W_{i}) - \hat{m}_{n}(x)) \le t \right \}.
\label{eq: subsampling distribution}
\end{equation}
Then, under the same conditions as in Theorem \ref{thm: limiting distributions}, we have 
\[
\sup_{t \in \R} \left | U_{n,\ell} (x;t) - \Prob \left ( s_{x}(\tau_{x})(\sigma_{x}/v_{x})^{2/3}Z \le t \right ) \right| \stackrel{\Prob}{\to} 0,
\]
provided that $\ell=\ell_n \to \infty$ and $\ell = o(n)$. Hence, denoting by $\hat{q}_{n,\ell}(x;1-\alpha)$ the $(1-\alpha)$-quantile of $U_{n,\ell}(x;\cdot)$, i.e.,
\[
\hat{q}_{n,\ell}(x;1-\alpha) = \inf \{ t : U_{n,\ell}(x;t) \ge 1-\alpha \},
\]
an asymptotic $(1-\alpha)$-confidence interval for $m(x)$ is given by
\[
\left [ \hat{m}_{n}(x) - \frac{\hat{q}_{n,\ell}(x;1-\alpha/2)}{(nh_{n}^{2})^{1/3}}, \hat{m}_{n}(x) - \frac{\hat{q}_{n,\ell}(x;\alpha/2)}{(nh_{n}^{2})^{1/3}}\right].
\]
Some comments on the subsampling confidence interval are in order.
\begin{remark}[Comments on subsampling confidence interval]
(i) In practice, $N = \binom{n}{\ell}$ is too large and so the computation of the complete average over $i=1,\dots,N$ in (\ref{eq: subsampling distribution}) is too demanding. Instead, we can take the average of a randomly selected subset of $\{ 1,\dots, N \}$; see Corollary 2.4.1 in \cite{Politis1999}. 

(ii) The bandwidth $h_{\ell}$ used in each subsample may be taken as $h_{n}$ as long as $\ell_{n}h_n^{8} \to 0$ and $\ell_{n}h_n^{5} \to \infty$. 
\end{remark}

\begin{remark}[Simultaneous confidence intervals over finite design points]
\label{rem: simultaneous interval}
Consider the setting of Corollary \ref{cor: multivariate}, and let $\nu_{1-\alpha}$ denote the $(1-\alpha)$ quantile of $\max_{1 \le j \le L} |s_{x^{j}}(\tau^{j}) W_{j}|$. Then a simultaneous confidence interval for $m(x)$ over the design points $x^{1},\dots,x^{L}$ is given by 
\[
\left [ \hat{m}(x^{j}) \pm \frac{\nu_{1-\alpha}}{(nh^{2})^{1/3}} \right ], \ j=1,\dots,L. 
\]
In general the distribution of $(W_{1},\dots,W_{L})^{T}$ is complicated as it depends on whether there are ties in $\tau_{x^{1}},\dots,\tau_{x^{L}}$, so analytical estimation of $\nu_{1-\alpha}$ is difficult. 
Instead, we can use the subsampling to estimate $\nu_{1-\alpha}$. The procedure is analogous to the pointwise case and hence omitted. 
\end{remark}

\section{Numerical results}
\label{sec: numerical results}

\subsection{Bandwidth selection}
\label{subsec: bandwidth selection}
The proposed estimator requires to choose the bandwidth $h$.
We suggest here a simple method to choose the bandwidth, which is based on a modification to the bandwidth selection rule suggested in \cite{KoenkerMachado1999}. 
The baseline idea of our approach is to select the bandwidth in such a way that the sparsity function $s_{x}(\tau)$ is well estimated. A similar approach is used in \cite{EinbecTutz2006} who adapt the smoothing bandwidth to kernel estimation of multi-modal regression by optimizing the conditional density estimation rate. The performance of the sparsity function estimate $\hat{s}_{x}(\tau)$ depends on the quantile $\tau$ of interest, and so the constant involved in the bandwidth should adapt to $\tau$. Since we are interested in $s_{x}(\tau)$ around $\tau = \tau_{x}$, we aim at choosing $h$ in such a way that $\hat{s}_{x}(\tau)$ around $\tau = \tau_{x}$ tends to be accurate but modify the rate of $h$ so that it satisfies Condition (v) in Assumption \ref{assumption}. 

For estimation of $s_{x}(\tau)$ based on quantile regression, \cite{KoenkerMachado1999} suggest to use the $\tau$-dependent bandwidth 
\[
h^{KM}(\tau)= n^{-1/3} z_\alpha^{2/3} \left\{1.5 \frac{\phi(\Phi^{-1}(\tau))}{2\Phi^{-1}(\tau)^2 +1} \right\}^{1/3},
\]
where $\phi$ and $\Phi$ are the density and distribution functions of $N(0,1)$, and $z_{\alpha} = \Phi^{-1}(1-\alpha/2)$. 
We set $\alpha = 0.05$. The bandwidth $h^{KM}(\tau)$ does not satisfy Condition (v) in Assumption \ref{assumption} and is $\tau$-dependent, and so we shall modify $h^{KM}(\tau)$ as follows: (i)  pick any design point $x$ in the support of $X$; (ii) use the pilot bandwidth $h^{\mathrm{pilot}} = n^{1/6} h^{KM}(0.5) \propto n^{-1/6}$ to construct a preliminary estimator $\hat{\tau}_{x}^{\mathrm{prelim}}$ of $\tau_{x}$; (iii) and  use $h_n = h_{n,x} = n^{1/6} h^{KM} (\hat{\tau}_x^{\mathrm{prelim}})$ to construct a final estimator $\hat{m}(x)$. The simulation results suggest that, although it would not be optimal, this bandwidth selection rule works reasonably well.

\subsection{Simulation results} 
\subsubsection{Comparison of RMSEs}
We compare the performance of our estimator with that of the linear modal regression estimator of \cite{KempSantos2012,YaoLi2014} via the root mean square error (RMSE) $\sqrt{\E_{X^{*}}[\{ \hat{m}(X^{*}) - m(X^{*}) \}^{2}]}$ where $X^{*} \stackrel{d}{=} X$ is independent of the data and $\E_{X^{*}}$ is the expectation with respect to $X^{*}$. 
We consider two settings: the first one is the case where the modal function is linear while the second one is the case where the modal function is nonlinear. 

Case (i). Consider a linear location-scale model
\[
Y = 1 + X_2 -3X_3 + X_4 + X_2 \nu,
\]
where $X=(1,X_2,X_3,X_4)^{T}, \ X_2, X_3 \sim U(0,1), \ X_4 \sim N(0,1)$, and $\nu \sim Ga(3,0.5)$ (the Gamma distribution with shape parameter $3$ and scale parameter $0.5$).
In this case, both the conditional quantile and modal functions are linear in $X$. In fact, $Q_{\tau}(X) = 1 + (1+F^{-1}(\tau))X_2 - 3X_3 + X_4$, where $F$ denotes the distribution function of $\nu$. 
In addition, since the mode of $Ga(3,0.5)$ is $1$, the modal function is $m(X) = 1+2X_2-3X_3+X_4$.

Case (ii). Consider the following data generating process
\[
Y = U^{3}/3- X_2 (U-1)^{2},
\]
where $X = (1,X_2)^{T}$, $X_2 \sim U(0,1)$, and $U \sim U(0,1)$ independent of $X$. In this case, the conditional quantile function is linear, $Q_{\tau}(X) = \tau^{3}/3 - X_2 (\tau-1)^{2}$, but the modal function is nonlinear, $m(X) = -2X_2^{3}/3+2X_2^{2}-X_2$; see Remark \ref{rem: linearity}. 

In this simulation study, we choose $\varepsilon=0.1$ and compute $\hat{Q}_{x}(\tau)$ for 100 equally spaced grids on $[\tau_{\min},\tau_{\max}] = [0.05,0.95]$. To implement the linear modal regression estimator, we follow the EM algorithm and the bandwidth selection rule suggested in \cite{YaoLi2014}.
The number of Monte Carlo repetitions is $1000$ for each case. 

Figures \ref{fig: RMSE Case1} and \ref{fig: RMSE Case2} present the box plots of RMSEs of the linear modal regression and proposed estimators for Cases (i) and (ii), respectively, with $n=500, 1000$, and $2000$. 
These figures lead to the following observations. First, in both cases, the RMSE of the proposed estimator overall decreases as the sample size increases. Second, the proposed estimator tends to be more variable than the linear modal regression estimator, so that the interquartile range of the RMSE is wider for the proposed estimator than the linear modal regression estimator.  
Third, in both cases, the proposed estimator outperforms the linear modal regression estimator. The superior performance of the proposed estimator in Case (ii) is not surprising since the true modal function is nonlinear in that case and so the linear modal regression estimator is not consistent. 
Interestingly,  even when the true modal function is linear (Case (i)), the proposed estimator performs substantially better than the linear modal regression estimator. This may be partly because the EM algorithm used to compute linear modal regression estimates failed to find global optimal solutions. 
Overall, the figures confirm that the proposed estimator works  well in practice. 
%

\begin{figure}[t!] 
   \centering
       \includegraphics[scale=0.33]{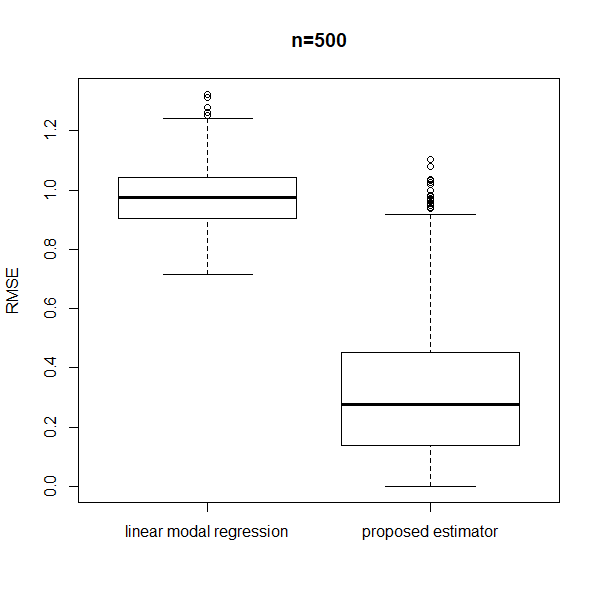}
       \includegraphics[scale=0.33]{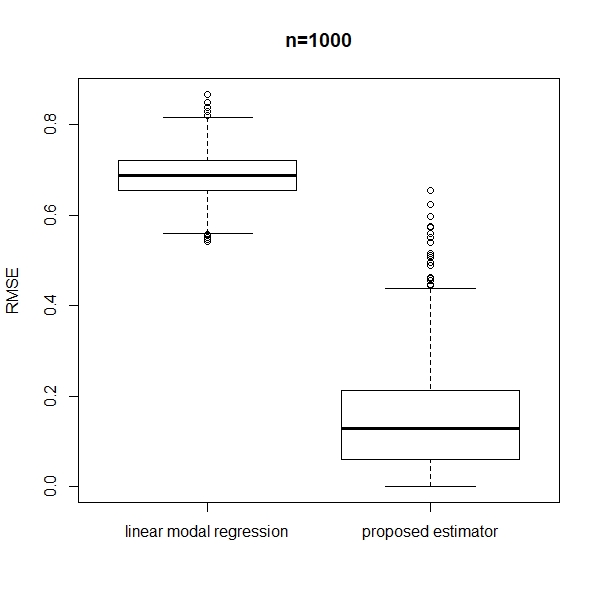}
       \includegraphics[scale=0.33]{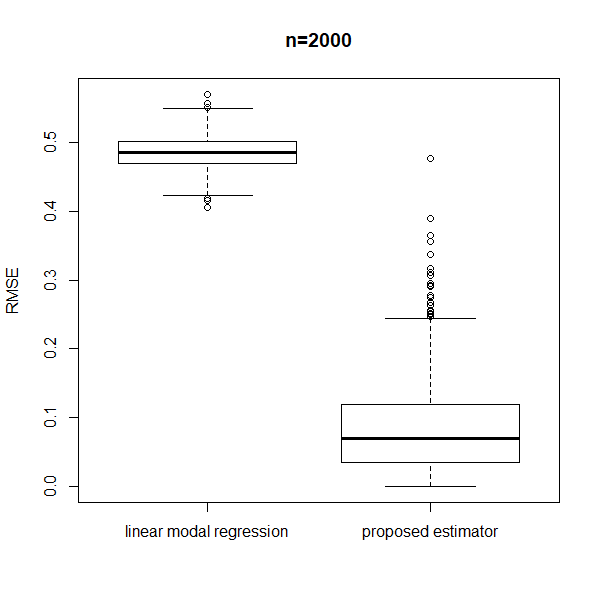}
    \caption{Box plots of RMSEs of the linear modal regression and proposed estimators for Case (i) with $n=500$ (left), $n=1000$ (middle), and $n=2000$ (right). }
    \label{fig: RMSE Case1}
\end{figure}

\begin{figure}[t!]
   \centering
       \includegraphics[scale=0.75]{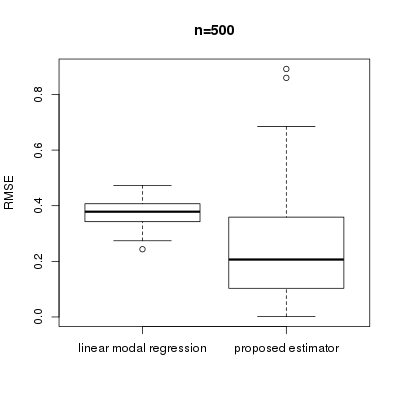}
       \includegraphics[scale=0.75]{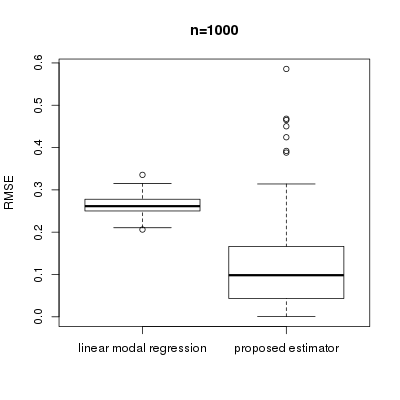}
       \includegraphics[scale=0.75]{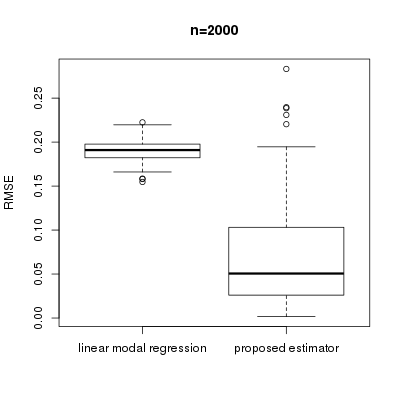}
    \caption{Box plots of RMSEs of the linear modal regression and proposed estimators for Case (ii) with $n=500$ (left), $n=1000$ (middle), and $n=2000$ (right). }
    \label{fig: RMSE Case2}
\end{figure}

\subsubsection{Coverage probabilities of confidence intervals} 

Next, we assess the performance of analytical and subsampling confidence intervals considered in Section \ref{subsec: inference}. 
We follow the data generating process of Case (ii) and evaluate Monte Carlo average and median lengths,  and coverage probabilities of confidence intervals at three design points $x_2 = 0.25, 0.5$, and $0.75$. We consider two nominal coverage probabilities of 99\% and 95\%. To implement the analytical confidence interval, we use the kernel-based estimator $\check{v}_{x}$  given in (\ref{eq: kernel estimation}) for $v_{x}$. To construct $\check{v}_{x}$, we use the Gaussian kernel for $\mathbb{K}_1$ and the Epanechnikov kernel for $\mathbb{K}_2$ together with bandwidths $b_{Y} = n^{-1/9}\hat{\sigma}_{Y}$ and $b_{X} = n^{-1/5} \hat{\sigma}_{X}$ where $\hat{\sigma}_{Y}$ and $\hat{\sigma}_{X}$ are the sample standard deviations of  $Y$ and $X$, respectively. 
To implement the subsampling confidence interval, we examine two subsample sizes: $\ell = 0.1 n$ and $0.2 n$. 
In this simulation study, instead of taking the average of whole subsamples in (\ref{eq: subsampling distribution}), we take the average of 250 randomly chosen subsamples. 
When applying the bandwidth selection rule to the subsample, we use the pilot bandwidth computed using the full sample. 

Tables \ref{table: analytic 99}--\ref{table: subsampling 95} present the simulation results on the confidence intervals. The tables show that both confidence intervals work reasonable well, given that the convergence rate of the estimator is relatively slow. 
It is worth noting that the estimators for the nuisance parameters $s_{x}(\tau_{x})$ and $v_{x}$ tend to be unstable, which results in the discrepancy between the average and median lengths of the analytical confidence interval. The subsample confidence interval is able to avoid estimation of those nuisance parameters, and so the length of the subsampling confidence interval tends to be shorter than that of the analytical confidence interval. In terms of the coverage probability, the subsampling confidence interval with subsample size $0.2n$ works the best. 

{\small

\begin{table}[h]
	
	\begin{tabular}{|c|c|c|c|c|} \hline
		Design point & Sample size & Ave. length &Med. length& Cov. probability \\ \hline \hline
		$x_2=0.25$ & $n=500$ & 0.494 & 0.419&0.981
		\\
		& $n=1000$&0.359 &0.315 &	0.986
		\\
		& $n=2000$ &0.247  &0.220 &0.985 \\ 
		\hline
		
		$x_2=0.50$ & $n=500$ &0.715 &0.599 &1.000\\
		& $n=1000$&0.506 &0.475  &0.997	\\
		& $n=2000$ &0.392  &0.380 & 0.992\\ 
		\hline
		
		$x_2=0.75$ & $n=500$ &1.045 &0.878 &0.978\\
		& $n=1000$&0.724 & 0.653&0.977	\\
		& $n=2000$ &0.524  &0.488 &0.956 \\ 
		\hline
	\end{tabular}
	\medskip
	\caption{Monte Carlo average and median lengths, and coverage probabilities of the $99\%$ analytical confidence interval. }
	\label{table: analytic 99}
\end{table}

\begin{table}[h]
	
	\begin{tabular}{|c|c|c|c|c|} \hline
	Design point & Sample size & Ave. length &Med. length& Cov. probability \\ \hline \hline
	$x_2=0.25$ & $n=500$ &0.309  &0.242 &0.948\\
	& $n=1000$&0.207 &0.175 &0.941	\\
	& $n=2000$ & 0.139 &0.128 &0.952 \\ 
	\hline
	
	$x_2=0.50$ & $n=500$ &0.459 &0.343 &0.987\\
	& $n=1000$& 0.302 & 0.269&0.933	\\
	& $n=2000$ &0.226  &0.221&0.894 \\ 
	\hline
	
	$x_2=0.75$ & $n=500$ &0.660  &0.534 &0.873\\
	& $n=1000$&0.429 &0.371 &0.869	\\
	& $n=2000$ &0.302  &0.278 &0.845 \\ 
	\hline
\end{tabular}
	\medskip
	\caption{Monte Carlo average and median lengths, and coverage probabilities  of the $95\%$ analytical confidence interval.}
\label{table: analytic 95}
	
\end{table}

\begin{table}[h]
	
	\begin{tabular}{|c|c|c|c|c|c|} \hline
		Design point & Sample size & Subsample size & Ave. length  &Med. length& Cov. probability \\ \hline \hline
		$x_2=0.25$ & $n=500$ & $0.1n$&0.232 &0.234 &0.959\\	
		& & $0.2n$&0.250 &0.262 &0.991\\\hline
		
		& $n=1000$  &$0.1n$ &0.208 &0.214 &0.966 \\
		&   &$0.2n$ &0.191&0.184&0.997\\ \hline
		
		& $n=2000$ &$0.1n$ &0.148&0.146&1.000\\
		&  &$0.2n$ &0.146&0.143&1.000\\\hline
		
		$x_2=0.50$ & $n=500$ & $0.1n$&0.336 &0.337&0.946
		\\
		& & $0.2n$&0.405&0.407 &0.999\\\hline
		
		& $n=1000$  &$0.1n$ &0.326&0.327 &0.973\\
		&   &$0.2n$ &0.391& 0.395&0.998\\ \hline
		
		& $n=2000$ &$0.1n$ &0.371&0.382&1.000\\
		&  &$0.2n$ &0.371&0.382&0.999\\\hline
		
		$x_2=0.75$ & $n=500$ & $0.1n$&0.447 &0.450&0.822
		\\
		& & $0.2n$&0.529 &0.538 &0.917 \\\hline
		
		& $n=1000$  &$0.1n$ &0.430&0.433 &0.847\\
		&   &$0.2n$ &0.488&0.508&0.961\\ \hline
		
		& $n=2000$ &$0.1n$ &0.416&0.415&0.971\\
		&  &$0.2n$ &0.423&0.416	&0.971\\\hline
		
	\end{tabular}
	\medskip
	
	\caption{Monte Carlo average and median lengths, and coverage probabilities of the $99\%$ subsampling confidence interval.}

\label{table: subsampling 99}
\end{table}

\begin{table}[h]
	
	\begin{tabular}{|c|c|c|c|c|c|} \hline
		Design point & Sample size & Subsample size & Ave. length  &Med. length& Cov. probability \\ \hline \hline
		$x_2=0.25$ & $n=500$ & $0.1n$&0.203 & 0.208&0.926\\	
		& & $0.2n$&0.198 & 0.195&0.982\\\hline
		
		& $n=1000$  &$0.1n$ &0.166 & 0.166&0.947 \\
		&   &$0.2n$ &0.148&0.145&0.993\\ \hline
		
		& $n=2000$ &$0.1n$ &0.120&0.119&0.997\\
		&  &$0.2n$ &0.118&0.116&0.998\\\hline
		
		$x_2=0.50$ & $n=500$ & $0.1n$&0.313 &0.314&0.899
		\\
		& & $0.2n$&0.374&0.380 &0.989\\\hline
		
		& $n=1000$  &$0.1n$ &0.304&0.306 &0.968\\
		&   &$0.2n$ &0.353&0.366 &0.997\\ \hline
		
		& $n=2000$ &$0.1n$ &0.316&0.326&0.994\\
		&  &$0.2n$ &0.318&0.328&0.996\\\hline
		
		$x_2=0.75$ & $n=500$ & $0.1n$&0.413 &0.416&0.779
		\\
		& & $0.2n$&0.473 &0.490 &0.887 \\\hline
		
		& $n=1000$  &$0.1n$ &0.388&0.396 &0.808\\
		&   &$0.2n$ &0.412&0.415&0.937\\ \hline
		
		& $n=2000$ &$0.1n$ &0.335&0.328&0.958\\
		&  &$0.2n$ &0.342&	0.336&0.959\\\hline
		
	\end{tabular}
	\medskip
	
	\caption{Monte Carlo average and median lengths, and coverage probabilities of the $95\%$ subsampling confidence interval.}

\label{table: subsampling 95}
\end{table}
}

\subsection{Combined Cycle Power Plant Data}

The electric energy output provided by a power plant fluctuates through the year because of several environmental conditions, and prediction of the electricity output given such environmental conditions is of interest. 
We apply the proposed estimator to predicting the net hourly electrical energy output using Combined Cycle Power Plant Data \cite{Kaya2012,Tufekci2014}. 
The data set is taken from \texttt{https://archive.ics.uci.edu/ml/datasets/
Combined+Cycle+Power+Plant} and consists  of 9568 data points collected from a Combined Cycle Power Plant over 6 years (2006-2011). It contains hourly average ambient variables Temperature, Ambient Pressure, Relative Humidity, Exhaust Vacuum, and the net hourly electrical energy output, where the first four variables are regressors and the last variable is a response. 
For this data, the conditional distribution tends to be skewed, and therefore it would be natural to estimate the conditional mode. 
Figure \ref{fig:power plant} shows the estimate of the conditional density given one of the regressors (Exhaust Vacuum).  It is seen that the conditional density estimate is highly skewed and the pattern of the skewness depends on the value of the regressor. 

\begin{figure}[t!] 
   \centering
       \includegraphics[scale=1]{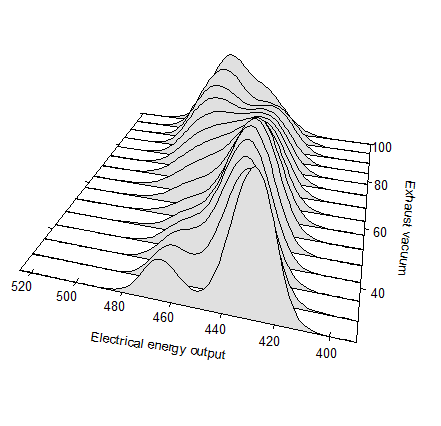}
\label{fig:power plant}
\caption{The conditional density estimate of the electronic energy output given Exhaust Vacuum.}
\end{figure}

To construct prediction intervals, we combine the proposed estimator with the  split conformal prediction of \cite{Lei2018}. Specifically:
\begin{enumerate}
\item[1.] Randomly split the index set $\{1,\dots,9568 \}$ into three parts $\mathcal{I}_{1},\mathcal{I}_{2}$, and $\mathcal{I}_{3}$. 
\item[2.] Use the data $\{ (Y_{i},X_{i}) : i \in \mathcal{I}_{1} \}$ to construct the estimator $\hat{m}(\cdot)$ for the modal function $m(\cdot)$.
\item[3.] Compute the $\alpha/2$- and $(1-\alpha/2)$-quantiles of $\{ Y_{i} - \hat{m}(X_{i}) : i \in \mathcal{I}_{2} \}$ and they are denoted by $\hat{\xi}_{\alpha/2}$ and $\hat{\xi}_{1-\alpha/2}$, respectively. In this experiment, $\alpha=0.05$ is used. 
\item[4.] Construct $C_{\text{split}}(x) = [ \hat{m}(x) + \hat{\xi}_{\alpha/2}, \hat{m}(x) + \hat{\xi}_{1-\alpha/2}]$.
\item[5.] Compute the empirical coverage probability:
\[
\frac{1}{|\mathcal{I}_{3}|} \sum_{i \in \mathcal{I}_{3}} I\{ Y_{i} \in C_{\text{split}}(X_{i}) \}.
\]
\end{enumerate}
In this experiment, we take $\mathcal{I}_{1},\mathcal{I}_{2}$, and $\mathcal{I}_{3}$ in such a way that $| \mathcal{I}_{1} \cup \mathcal{I}_{2} | : | \mathcal{I}_{3} | \approx 0.95 : 0.05$ and $| \mathcal{I}_{1} | : | \mathcal{I}_{2} | \approx 8:2$. 
We repeated this procedure 250 times and report the average of the empirical coverage probabilities together with the average and median lengths. In addition, we compare the proposed estimator with the linear modal regression estimator.
Table \ref{table:conformal} shows the results. 
For both methods, the empirical coverage probabilities are surprisingly close to the nominal coverage probability of $95 \%$, which is consistent with the theory developed in \cite{Lei2018}. On the other hand, the average and median lengths of the conformal prediction band with the proposed estimator are substantially smaller than those with the linear modal regression estimator, which is an encouraging sign for the proposed estimator. 

{\small
\begin{table}[h]
\centering
\begin{tabular}{|c|c|c|c|} \hline
Method &  Average length &Median length& Coverage probability \\ \hline \hline
Proposed method &19.01 &19.02 & 0.950 \\
Modal linear regression &23.71 &23.32 &0.950 \\
\hline
\end{tabular}
\medskip
\caption{Monte Carlo average and median lengths, and empirical coverage probabilities of the $95\%$ conformal prediction intervals.}
\label{table:conformal}
\end{table}
}

\section{Discussion}
\label{sec: conclusion}

In the present paper we have proposed a new estimator for the conditional mode based on quantile regression. 
The proposed estimate is computationally scalable since the quantile regression problem can be formulated as a linear programming problem. We have developed asymptotic distributional theory for the proposed estimator, which turns out to be nonstandard. Specifically, we have shown that the rate of convergence of the proposed estimator is $(nh^{2})^{1/3}$ where $h = h_n \to 0$ is a sequence of bandwidths, and that the limiting distribution is a scale transformation of Chernoff's distribution. For inference, we have discussed analytical and subsampling confidence intervals.
Finally we have verified the practical usefulness of the proposed method through numerical experiments. 

In the present paper, we use the naive quantile regression estimator $\hat{\beta}(\tau)$ that is not smooth in $\tau$ to estimate the conditional quantile function, while the true slope vector $\beta (\tau)$ is smooth in $\tau$ under our assumption. An interesting alternative approach is to impose smoothness to $\hat{\beta}(\tau)$ so that the estimated conditional quantile function is differentiable in $\tau$. We expect that the resulting conditional mode estimator would have a Gaussian limit (under regularity conditions), which is a reminiscent of the smoothed maximum score estimator of \cite{Horowitz1992}. Developing this alternative approach requires a whole new theory and is left as future research.

\section*{Acknowledgments}

The authors would like thank the Editor Domenico Marinucci, an AE, and an anonymous referee for their careful review and  constructive comments that helped improve on the quality of the paper. 

\clearpage

\appendix

\section{Proofs}

\subsection{Preliminaries}

In what follows, we will obey the following notation. 
For a given probability space $(S,\mathcal{S},Q)$ and a measurable function $f: S \to \R$, we use the notation $Qf = \int f dQ$ whenever the latter integral exists. 
For a class of measurable real-valued functions $\mathcal{F}$ on $S$, let $N(\mathcal{F},\| \cdot \|_{Q,2},\delta)$ denote the $\delta$-covering number for $\mathcal{F}$ with respect to the $L^{2}(Q)$-seminorm  $\| \cdot \|_{Q,2}$; see Section 2.1 in \cite{vdVW1996} for details. 
In addition, for a (vector-valued) function $g$ on a set $T$, we use the notation $\| g \|_{T} = \sup_{x \in T} \| g(x) \|$, where $\| \cdot \|$ denotes the Euclidean norm. 
We denote by $\stackrel{d}{=}$ the equality in distribution. 

The following maximal inequality will be repeatedly used in the proof of Theorem \ref{thm: limiting distributions}. 
 
\begin{lemma}[A useful maximal inequality]
\label{lem: maximal inequality}
Let $X_{1},\dots,X_{n}$ be i.i.d. random variables taking values in a measurable space $(S,\mathcal{S})$ with common distribution $P$, and let $\mathcal{F}$ be a pointwise measurable class of (measurable) real-valued functions on $S$ with measurable envelope $F$.\footnote{The class $\mathcal{F}$ is said to be pointwise measurable if there exists a countable subclass $\mathcal{G} \subset \mathcal{F}$ such that for every $f \in \mathcal{F}$ there exists a sequence $g_{m} \in \mathcal{G}$ with $g_{m} \to f$ pointwise; see Section 2.3 in \cite{vdVW1996}. } Suppose that there exist constants $A \geq e$ and $V \geq 1$ such that $\sup_{Q} N(\mathcal{F},\| \cdot \|_{Q,2}, \eta\| F \|_{Q,2}) \leq (A/\eta)^{V}$ for all $0 <  \eta \leq 1$, where $\sup_{Q}$ is taken over all finitely discrete distributions on $S$. Furthermore, suppose that $0 < PF^{2} < \infty$,
 and let $\sigma^{2}$ be any positive constant such that $\sup_{f \in \mathcal{F}} Pf^{2}\le \sigma^{2} \le PF^{2}$. Finally, let $B=\sqrt{\E[\max_{1 \le i \le n} F^{2}(X_{i})]}$. 
Then
\[
\E \left [ \left \|\sum_{i=1}^{n} \{ f(X_{i}) - Pf \} \right \|_{\mathcal{F}} \right ]   \le C  \left [ \sqrt{nV\sigma^{2}\log (A\| F \|_{P,2}/\sigma)} + VB \log (A \| F \|_{P,2}/\sigma) \right ],
\]
 where $\| \cdot \|_{\mathcal{F}} = \sup_{f \in \mathcal{F}} | \cdot |$ and $C$ is a universal constant. 
\end{lemma}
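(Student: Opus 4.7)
The plan is to combine three standard tools from empirical process theory: a symmetrization inequality, Dudley's entropy integral applied conditionally to the Rademacher process, and a self-bounding argument that decouples the random empirical $L^{2}$ suprema from their deterministic counterparts. The inequality is a variant of well-known maximal inequalities (cf.\ Theorem 2.14.1 in \cite{vdVW1996}), stated with an additive remainder involving $B$ rather than $\| F \|_{\infty}$, and the sharpening in the remainder is what gives the proof its subtlety.

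First, I would apply the standard symmetrization inequality to bound
\[
\E \left\| \sum_{i=1}^{n} \{ f(X_{i}) - Pf \} \right\|_{\mathcal{F}} \le 2\, \E \left\| \sum_{i=1}^{n} \varepsilon_{i} f(X_{i}) \right\|_{\mathcal{F}},
\]
where $\varepsilon_{1},\dots,\varepsilon_{n}$ are i.i.d.\ Rademacher variables independent of $(X_{i})$. Conditional on $X_{1},\dots,X_{n}$, the Rademacher process $f \mapsto \sum_{i} \varepsilon_{i} f(X_{i})$ is sub-Gaussian with respect to the semimetric $d_{n}(f,g) = \sqrt{n}\, \| f - g \|_{P_{n},2}$, where $P_{n}$ denotes the empirical measure. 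Dudley's entropy bound followed by the substitution $\eta = \tau/\sqrt{n}$ yields
\[
\E_{\varepsilon} \left\| \sum_{i=1}^{n} \varepsilon_{i} f(X_{i}) \right\|_{\mathcal{F}} \lesssim \sqrt{n} \int_{0}^{\Sigma_{n}} \sqrt{\log N(\mathcal{F}, \| \cdot \|_{P_{n},2}, \eta)}\, d\eta,
\]
with $\Sigma_{n} := \sup_{f \in \mathcal{F}} \| f \|_{P_{n},2}$. Substituting the hypothesis $\log N \le V \log(A \| F \|_{P_{n},2}/\eta)$ and using the elementary estimate $\int_{0}^{a} \sqrt{\log(A/s)}\, ds \lesssim a \sqrt{\log(A/a)}$ (valid for $0 < a \le A/e$ since $A \ge e$), I obtain, after the change of variables $s = \eta/\| F \|_{P_{n},2}$,
\[
\E_{\varepsilon} \left\| \sum_{i=1}^{n} \varepsilon_{i} f(X_{i}) \right\|_{\mathcal{F}} \lesssim \sqrt{nV}\, \Sigma_{n}\, \sqrt{\log(A \| F \|_{P_{n},2}/\Sigma_{n})}.
\]

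The hard part is the last step: passing from the random $\Sigma_{n}$ and $\| F \|_{P_{n},2}$ to the deterministic $\sigma$ and $\| F \|_{P,2}$ without losing a factor of $\| F \|_{\infty}$ in the additive term. For this, I would write $\Sigma_{n}^{2} \le \sigma^{2} + \sup_{f \in \mathcal{F}} | (P_{n} - P) f^{2} |$ and bound the second term by a maximal inequality applied to the squared class $\mathcal{F}^{2} := \{ f^{2} : f \in \mathcal{F} \}$, whose envelope is $F^{2}$ and whose uniform entropy is controlled by that of $\mathcal{F}$ through the standard contraction trick. This produces a self-referential inequality for $\E[\Sigma_{n}^{2}]$ that I would solve via the usual fixed-point argument. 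The envelope moment $B^{2} = \E[\max_{i} F^{2}(X_{i})]$ enters here because $\| F \|_{P_{n},2}^{2} \le \| F \|_{P,2}^{2} + n^{-1} \max_{i} F^{2}(X_{i})$ and because Talagrand-type concentration for $\mathcal{F}^{2}$ naturally produces a remainder of order $V B^{2} \log(\cdot)/n$. Taking expectations, using Jensen's inequality to push the expectation inside the concave map $t \mapsto \sqrt{t \log(A \| F \|_{P,2}/\sqrt{t})}$, and collecting terms produces the claimed two-term bound. The principal obstacle is precisely this peeling/truncation: avoiding the appearance of $\| F \|_{\infty}$ in the additive term requires splitting the argument on the event $\{ \max_{i} F(X_{i}) \le c B \}$ for a suitable constant $c$ and handling the complementary event via Markov's inequality, analogously to the proof of Lemma 3.4.3 in \cite{vdVW1996}.
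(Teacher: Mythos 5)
Your outline is sound, but be aware that the paper does not actually prove this lemma: the statement is quoted from Corollary 5.1 of \cite{CCK2014} and the paper's ``proof'' is a one-line citation. What you have written is essentially a reconstruction of the argument behind that cited result — symmetrization, the conditional Dudley entropy-integral bound with the uniform covering-number hypothesis evaluated at $Q=P_n$, and a self-bounding step for $\E[\Sigma_n^2]$ resolved by a fixed-point computation — so your route is correct in structure, and it buys a self-contained proof where the paper buys brevity. Two details in your sketch deserve correction. First, the additive term $VB\log(A\|F\|_{P,2}/\sigma)$ does not arise from ``Talagrand-type concentration for $\mathcal{F}^2$''; it arises from symmetrizing $\sup_{f}|(P_n-P)f^2|$, applying the Ledoux--Talagrand contraction principle (conditionally on the data, $u\mapsto u^2$ is Lipschitz with constant $2\max_i F(X_i)$ on the relevant range), and then Cauchy--Schwarz to split off $\E[\max_i F^2(X_i)]=B^2$; no concentration inequality is needed, and the final truncation on $\{\max_i F(X_i)\le cB\}$ you propose is likewise unnecessary. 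Second, the Jensen step is more delicate than you indicate: the map $t\mapsto\sqrt{t\log(A\|F\|_{P,2}/\sqrt{t})}$ is not concave on all of $(0,\infty)$, and the random argument involves both $\Sigma_n$ and $\|F\|_{P_n,2}$ simultaneously; the clean way to execute this is through the normalized uniform entropy integral $J(\delta)$, exploiting that $J$ is concave and $\delta\mapsto J(\delta)/\delta$ is nonincreasing, as in the proof of Theorem 5.2 and Lemma A.2 of \cite{CCK2014} (compare also Theorem 2.14.1 and Lemma 3.4.3 in \cite{vdVW1996}). With those two repairs your argument closes.
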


\begin{proof}
See Corollary 5.1 in \cite{CCK2014}.
\end{proof}

In particular, if we take $\sigma^{2} = PF^{2}$, then using the inequality $B \le \sqrt{n} \| F \|_{P,2}$, we also have 
\begin{equation}
\E \left [ \left \|\sum_{i=1}^{n} \{ f(X_{i}) - Pf \} \right \|_{\mathcal{F}} \right ]   \le 2C \sqrt{n}  \| F \|_{P,2} V \log A.
\label{eq: simple maximal inequality}
\end{equation}
The right hand side on (\ref{eq: simple maximal inequality}) can be improved to $\| F \|_{P,2} \sqrt{V \log A}$ up to a universal constant (cf. Theorem 2.14.1 in \cite{vdVW1996}), but this does not matter to the proof of Theorem \ref{thm: limiting distributions}.

\begin{lemma}
\label{lem: max convergence}
For i.i.d. random variables $\zeta_{1},\zeta_{2},\dots$, $\E[ \max_{1 \le i \le n}|\zeta_{i}|] = o(n)$ if and only if $\E[ |\zeta_1| ] < \infty$.
\end{lemma}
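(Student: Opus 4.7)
The plan is to prove the two implications separately; both are short, and I do not anticipate any serious obstacle since the lemma is essentially a standard truncation exercise.

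For the direction $\E[|\zeta_1|] < \infty \Rightarrow \E[\max_{1 \le i \le n}|\zeta_i|] = o(n)$, I would set $T_n := \max_{1 \le i \le n}|\zeta_i|$ and exploit the pointwise inequality
\[
T_n \le M + \sum_{i=1}^{n} |\zeta_i|\, I(|\zeta_i| > M), \quad M > 0,
\]
which holds because either $T_n \le M$ (handled by the first term) or else the index $i^{\ast}$ attaining the maximum satisfies $|\zeta_{i^{\ast}}| > M$, so $T_n = |\zeta_{i^{\ast}}|$ appears among the summands. Taking expectations and using the i.i.d.\ assumption gives $\E[T_n] \le M + n \E[|\zeta_1|\, I(|\zeta_1|>M)]$, and dividing by $n$ yields $\limsup_{n\to\infty} \E[T_n]/n \le \E[|\zeta_1|\, I(|\zeta_1|>M)]$. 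Sending $M \to \infty$ and invoking dominated convergence (with integrable majorant $|\zeta_1|$) brings the right-hand side to $0$.

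For the converse, I would simply observe the monotonicity $T_n \ge |\zeta_1|$, whence $\E[T_n] \ge \E[|\zeta_1|]$. If $\E[T_n] = o(n)$, then in particular $\E[T_n]$ is finite for all sufficiently large $n$, so $\E[|\zeta_1|] \le \E[T_n] < \infty$. The only non-mechanical step in the whole argument is recognizing the truncation inequality above, which replaces the maximum by a sum at the cost of an additive $M$ term; everything else reduces to a two-line verification.
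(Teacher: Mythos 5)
Your proof is correct, but it takes a different route from the paper's. For the ``if'' direction the paper argues through almost sure convergence: by the strong law of large numbers, $n^{-1}\max_{1\le i\le n}|\zeta_i| \le n^{-1}\sum_{i=1}^n |\zeta_i| \to \E[|\zeta_1|]$ a.s., which forces $n^{-1}\max_{1\le i\le n}|\zeta_i|\to 0$ a.s., and the convergence of expectations is then upgraded from the a.s.\ statement via the generalized dominated convergence theorem (with the dominating sequence $n^{-1}\sum_{i=1}^n|\zeta_i|$, whose integrals converge). Your truncation bound $T_n \le M + \sum_{i=1}^n |\zeta_i|\, I(|\zeta_i|>M)$ instead works directly at the level of expectations: it needs no law of large numbers and no generalized dominated convergence, only the vanishing of the tail expectation $\E[|\zeta_1| I(|\zeta_1|>M)]$ as $M\to\infty$, so it is the more elementary and self-contained argument. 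The paper's version buys brevity by citing standard results; yours buys transparency and in fact gives a quantitative bound, $\E[T_n] \le M_n + n\,\E[|\zeta_1| I(|\zeta_1|>M_n)]$ for any $M_n\to\infty$ with $M_n = o(n)$, which the a.s.\ route does not directly provide. Your handling of the converse (monotonicity $T_n \ge |\zeta_1|$) matches what the paper dismisses as trivial and is fine.
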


\begin{proof}
This is a well known result in probability theory, but we provide its proof for the sake of completeness. The ``only if'' direction is trivial, and so we prove the ``if'' direction. Suppose that $\E[|\zeta_1|] < \infty$. Then the strong law of large numbers yields that $\max_{1 \le i \le n} |\zeta_{i}|/n \le \sum_{i=1}^{n} |\zeta_i|/n \to \E[|\zeta_1|]$ almost surely, which also implies that $\max_{1 \le i \le n}|\zeta_{i}|/n \to 0$ almost surely (in general for a sequence of real numbers $\{ a_{i} \}_{i=1}^{\infty}$, if $n^{-1} \sum_{i=1}^{n}a_{i}$ converges as $n \to \infty$, then $\max_{1 \le i \le n} |a_{i}| = o(n)$). The the desired result follows from the generalized dominated convergence theorem (cf. Problem 4.3.12 in \cite{Dudley2002}). 
\end{proof}


\subsection{Proof of Theorem \ref{thm: limiting distributions}}

The proof of Theorem \ref{thm: limiting distributions} depends on the following Bahadur representation of the quantile regression estimator $\hat{\beta}(\tau)$. 

\begin{lemma}[Bahadur representation of $\hat{\beta}(\tau)$]
\label{lem: Bahadur}
Under Assumption \ref{assumption}, we have 
\[
\hat{\beta}(\tau) - \beta(\tau) = J(\tau)^{-1} \left [ \frac{1}{n} \sum_{i=1}^{n}\{ \tau - I(Y_{i} \le X_{i}^{T}\beta(\tau)) \} X_{i}\right ] + R_{n}(\tau),
\]
where $J(\tau) = \E [ f(X^{T}\beta(\tau) \mid X)XX^{T} ]$ and $\| R_n \|_{[\varepsilon/2,1-\varepsilon/2]} = o_{\Prob}(n^{-3/4} \log n)$. In addition, 
\begin{equation}
\left \| \frac{1}{n}\sum_{i=1}^{n}\{ \tau - I(Y_{i} \le X_{i}^{T}\beta(\cdot)) \} X_{i} \right \|_{[\varepsilon/2,1-\varepsilon/2]} = O_{\Prob} (n^{-1/2}).
\label{eq: score process}
\end{equation}
\end{lemma}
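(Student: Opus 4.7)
The plan is to first establish the score process bound \eqref{eq: score process}, then deduce a preliminary uniform $\sqrt{n}$-consistency of $\hat{\beta}(\tau)$, and finally obtain the Bahadur representation by linearizing the subgradient condition, with the remainder rate driven by the empirical process bound in Lemma \ref{lem: maximal inequality}. Set $\psi_{\tau,\beta}(y,x) := \{\tau - I(y \le x^{T}\beta)\}x$, so that $P\psi_{\tau,\beta(\tau)} = 0$ by the defining property of the conditional quantile, where $P$ denotes the distribution of $(Y,X)$ and $P_n$ its empirical counterpart.

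For \eqref{eq: score process}, each coordinate class $\mathcal{F}_{j} = \{(y,x) \mapsto \{\tau - I(y \le x^{T}\beta(\tau))\}x_{j} : \tau \in [\varepsilon/2, 1-\varepsilon/2]\}$ is VC-type with envelope $|x_{j}|$, since $\tau \mapsto I(y \le x^{T}\beta(\tau))$ varies in a one-dimensional monotone family of indicators of half-spaces. Applying Lemma \ref{lem: maximal inequality} with $\sigma^{2} = \E[X_{j}^{2}]$ and $B = \sqrt{\E[\max_{i}X_{ij}^{2}]} = O(n^{1/4})$ (using $\E[\max_{i}X_{ij}^{2}] \le \sqrt{n\E[X_{j}^{4}]}$ under Assumption \ref{assumption}(i)) bounds the expected supremum of $|\sum_{i}(\psi_{\tau,\beta(\tau)}(Y_i,X_i) - P\psi_{\tau,\beta(\tau)})|$ by $O(\sqrt{n})$, giving \eqref{eq: score process}. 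Combining this with a uniform quadratic minorant of the population check loss around $\beta(\tau)$ -- obtained via Knight's identity together with the uniform positive definiteness of $J(\tau)$ on $[\varepsilon/2, 1-\varepsilon/2]$ (guaranteed by Assumption \ref{assumption}(ii)-(iv)) -- a standard Hjort--Pollard convexity argument yields the preliminary consistency $\|\hat{\beta} - \beta\|_{[\varepsilon/2, 1-\varepsilon/2]} = O_{\Prob}(n^{-1/2})$.

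For the Bahadur representation itself, the subgradient optimality of $\hat{\beta}(\tau)$ implies $\|n^{-1}\sum_{i=1}^{n}\psi_{\tau,\hat{\beta}(\tau)}(Y_{i},X_{i})\| \le d\, n^{-1}\max_{i}\|X_{i}\| = O_{\Prob}(n^{-3/4})$ uniformly in $\tau$ (because at a vertex of the piecewise linear check-loss optimum, at most $d$ residuals vanish). I would then decompose
\[
n^{-1}\sum_{i=1}^{n}\psi_{\tau,\hat{\beta}(\tau)}(Y_{i},X_{i}) = (P_{n} - P)\psi_{\tau,\beta(\tau)} + P\psi_{\tau,\hat{\beta}(\tau)} + (P_{n} - P)[\psi_{\tau,\hat{\beta}(\tau)} - \psi_{\tau,\beta(\tau)}],
\]
and Taylor-expand $\beta \mapsto P\psi_{\tau,\beta}$ around $\beta(\tau)$ using the bound on $f^{(1)}$ from Assumption \ref{assumption}(iii) to obtain $P\psi_{\tau,\hat{\beta}(\tau)} = -J(\tau)(\hat{\beta}(\tau) - \beta(\tau)) + O(\|\hat{\beta}(\tau) - \beta(\tau)\|^{2})$ uniformly in $\tau$, with the squared term $O_{\Prob}(n^{-1})$. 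Since $J(\tau)^{-1}$ is uniformly bounded in operator norm on $[\varepsilon/2, 1-\varepsilon/2]$, solving for $\hat{\beta}(\tau) - \beta(\tau)$ yields the claimed representation provided the final increment term has rate $o_{\Prob}(n^{-3/4}\log n)$.

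The main obstacle is controlling this increment uniformly. By the preliminary consistency, on an event of probability tending to one the supremum is dominated by the empirical process indexed by the peeled class $\mathcal{G}_{M} = \{\psi_{\tau,\beta} - \psi_{\tau,\beta(\tau)} : \tau \in [\varepsilon/2, 1-\varepsilon/2],\ \|\beta - \beta(\tau)\| \le M/\sqrt{n}\}$; note that the $\tau X$ terms cancel, so each element equals $(I(Y \le X^{T}\beta(\tau)) - I(Y \le X^{T}\beta))X_{j}$, a coordinate of $X$ times a difference of indicators of half-spaces, which makes $\mathcal{G}_{M}$ VC-type with constants independent of $n$ (using the smoothness of $\tau \mapsto \beta(\tau)$ implied by Assumption \ref{assumption}(iii)-(iv) to keep the indicator family effectively two-parameter). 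The pointwise bound $|I(y \le x^{T}\beta) - I(y \le x^{T}\beta(\tau))| \le I(|y - x^{T}\beta(\tau)| \le \|x\|\|\beta - \beta(\tau)\|)$ produces an envelope whose $L^{2}(P)$-norm squared is $O(n^{-1/2})$ (by the uniform boundedness of $f$), while the $B$-quantity still satisfies $B^{2} \le \E[\max_{i}X_{ij}^{2}] = O(n^{1/2})$. Taking $\sigma^{2} = O(n^{-1/2})$ in Lemma \ref{lem: maximal inequality} makes both terms of order $n^{1/4}$, so the increment is $O_{\Prob}(n^{-3/4}) = o_{\Prob}(n^{-3/4}\log n)$. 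The random constraint $\|\hat{\beta} - \beta\| \le M/\sqrt{n}$ is handled by a standard peeling over dyadic scales of $M$, letting $M \to \infty$ at the end.
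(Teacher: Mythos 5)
Your proposal follows essentially the same route as the paper's proof: the same decomposition of the empirical score at $\hat\beta(\tau)$, the same Taylor expansion of $\beta\mapsto P\psi_{\tau,\beta}$ giving the $-J(\tau)(\hat\beta(\tau)-\beta(\tau))+O_{\Prob}(n^{-1})$ term, and the same control of the increment $(P_n-P)[\psi_{\tau,\hat\beta(\tau)}-\psi_{\tau,\beta(\tau)}]$ via Lemma \ref{lem: maximal inequality} applied to a localized VC-type class with $\sigma^2=O(M_nn^{-1/2})$. The only structural difference is that you prove the score bound \eqref{eq: score process} and the preliminary uniform $\sqrt{n}$-consistency directly (maximal inequality plus a Knight/convexity argument), whereas the paper simply cites Theorem 3 of Angrist--Chernozhukov--Fern\'andez-Val; your direct arguments are standard and fine.

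Two places where you are glossing over steps the paper handles explicitly. First, your justification of the subgradient bound --- ``at a vertex of the piecewise linear check-loss optimum, at most $d$ residuals vanish'' --- is not a deterministic fact: degenerate configurations can interpolate more than $d$ points. What is needed is (a) the KKT-type inequality $\|\sum_i\{\tau-I(Y_i\le X_i^T\hat\beta(\tau))\}X_i\|\le \Card(I^*_\tau)\max_i\|X_i\|$ (the paper's Lemma \ref{lem: FOC}), and (b) the statement that $\sup_\tau\Card(I^*_\tau)\le d$ \emph{almost surely}, which the paper proves by conditioning on the regressors, using absolute continuity of the law of $(Y_i)_{i\in I}$ given $X_1^n$ to show any fixed set of $d+1$ points is interpolated with probability zero, and summing over subsets via Fubini. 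Second, with only $B^2=\E[\max_iX_{ij}^2]=O(n^{1/2})$ the second term of the maximal inequality yields $O(n^{-3/4}\log n)$ after dividing by $n$, which is the target rate with a big-$O$ rather than the claimed $o_{\Prob}(n^{-3/4}\log n)$; the paper gets the strict little-$o$ by noting $\E[\max_i\|X_i\|^2]\le(\E[\max_i\|X_i\|^4])^{1/2}=o(n^{1/2})$ from Lemma \ref{lem: max convergence} and $\E[\|X\|^4]<\infty$. Your stated conclusion ``the increment is $O_{\Prob}(n^{-3/4})$'' is also too strong, since the first term carries a $\sqrt{M_n\log n}$ factor; the correct accounting is $O(n^{-3/4}\sqrt{M_n\log n})+o(n^{-3/4}\log n)$ with $M_n\to\infty$ slowly. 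Both issues are repairable along the paper's lines and do not invalidate the approach.
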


The conclusion of the lemma is partly known in the literature, but we include the proof of the lemma since we could not find a right reference that exactly establishes the conclusion of the lemma under our assumption. We defer the proof of this lemma after the proof of Theorem \ref{thm: limiting distributions}. 

\begin{proof}[Proof of Theorem \ref{thm: limiting distributions}]
We divide the proof into several steps. 

\underline{Step 1}. 
We first expand the objective function $\hat{s}_{x}(\tau)$ using the Bahadur representation of $\hat{\beta}(\tau)$. 
Let $F(y \mid X)$ denote the conditional distribution function of $Y$ given $X$, and let $U_{i} = F(Y_i \mid X_i)$ for $i=1,\dots,n$. The variable $U_{i}$ follows the uniform distribution on $(0,1)$ independent of $X_{i}$ for each  $i=1,\dots,n$. Since 
\[
Y_{i} \le X_{i}^{T}\beta(\tau) \Leftrightarrow U_i \le \tau
\]
under our assumption (recall that $X_{i}^{T} \beta(\tau)$ is the conditional $\tau$-quantile of $Y_{i}$ given $X_{i}$), 
we also have 
\begin{equation}
\hat{\beta}(\tau) - \beta(\tau) = J(\tau)^{-1} \left [ \frac{1}{n}\sum_{i=1}^{n}\{ \tau - I(U_i \le \tau) \} X_{i}\right ] + R_{n}(\tau). \label{eq: Bahadur}
\end{equation}
Using the Bahadur representation (\ref{eq: Bahadur}) along with some calculations, we have that 
\[
\begin{split}
\hat{s}_{x}(\tau) &= s_{x,n}(\tau) + x^{T} J(\tau)^{-1} \left [ \frac{1}{n} \sum_{i=1}^{n} \{1-I(U_{i} \in (\tau-h,\tau+h])/(2h) \} X_{i} \right ]  \\
&\quad + \underbrace{O_{\Prob}(n^{-1/2}) + o_{\Prob}(n^{-3/4}h^{-1} \log n)}_{=o_{\Prob}((nh^{2})^{-2/3})},
\end{split}
\]
where $s_{n,x} = \{ Q_{x}(\tau+h)-Q_{x}(x-h) \}/(2h)$ and the $o_{\Prob}$ and $O_{\Prob}$ terms are uniform in $\tau \in [\varepsilon,1-\varepsilon]$. 

Now, let $K(u)=I(u \in (-1,1])/2$ and $K_h(u) =h^{-1}K(u/h)$. 
Define 
\[
g_{n,\tau} (U,X) = s_{x,n}(\tau) + x^{T}J(\tau)^{-1}X \{ 1-K_{h}(U-\tau) \}.
\]
Denoting by $\Prob_{n}$ the empirical probability measure for $\{ (U_{i},X_{i}) \}_{i=1}^{n}$, we have 
\[
\hat{s}_{x}(\tau) = \Prob_{n} g_{n,\tau} + o_{\Prob}((nh^{2})^{-2/3}),
\]
where the $o_{\Prob}$ term is uniform in $\tau \in [\varepsilon,1-\varepsilon]$, and so $\hat{\tau}_{x}$ satisfies that 
\begin{equation}
\Prob_{n} g_{n,\hat{\tau}_{x}} \le \inf_{\tau \in [\varepsilon,1-\varepsilon]} \Prob_{n} g_{n,\tau} + o_{\Prob}((nh^{2})^{-2/3}). \label{eq: optimality condition}
\end{equation}
In what follows, we denote by $P$ the joint distribution of $(U,X)$.

\underline{Step 2}. Next, we show consistency of $\hat{\tau}_{x}$. 
To this end, consider the function class $\mathcal{G}_{n} = \{ g_{n,\tau} : \tau \in [\varepsilon,1-\varepsilon] \}$. It is seen that there exists a constant $C_1$ (independent of $n$) such that $\sup_{\tau \in [\varepsilon,1-\varepsilon]}|g_{n,\tau}(U,X)| \le C_{1}(1+\| X \|/h) =: G_{n}(U,X)$.
 Then there exist constants $A_{1}$ and $V_{1}$ independent of $n$ such that 
\[
\sup_{Q} N(\mathcal{G}_{n},\| \cdot \|_{Q,2}, \eta \| G_{n} \|_{Q,2}) \le (A_{1}/\eta)^{V_{1}}, \ 0 < \forall \eta \le 1,
\]
where the $\sup_{Q}$ is taken over all finitely discrete distributions on $(0,1) \times \mathcal{X}$. This follows from a small modification to the proof of Lemma 3.1 in \cite{Ghosal2000} and so we omit the detailed proof. In addition, it is seen that $\sup_{\tau \in [\varepsilon,1-\varepsilon]} Pg_{n,\tau}^{2} = O(h^{-1}), \ PG_{n}^{2} = O(h^{-2})$, and $\E[ \max_{1 \le i \le n} G_{n}^{2}(U_{i},X_{i})] = o(n^{1/2}h^{-2})$ by Lemma \ref{lem: max convergence}.

Now, applying the maximal inequality of Lemma \ref{lem: maximal inequality}, we have 
\begin{equation}
\label{eq: rate sparsity}
\E \left [ \| \Prob_{n}g_{n,\tau} - Pg_{n,\tau} \|_{[\varepsilon,1-\varepsilon]} \right ] = \underbrace{O((nh)^{-1/2} \sqrt{\log n}) + o( n^{-3/4}h^{-1} \log n)}_{= o(1)},
\end{equation}
which implies that $\| \Prob_{n}g_{n,\tau} - Pg_{n,\tau} \|_{[\varepsilon,1-\varepsilon]} = o_{\Prob} (1)$ by Markov's inequality. 
Further, $Pg_{n,\tau} = s_{x,n}(\tau) = s_{x}(\tau) + o(1)$ uniformly in $\tau \in [\varepsilon,1-\varepsilon]$ and $s_{x}(\tau)$ is uniquely minimized at $\tau = \tau_{x}$ by assumption. 
Hence, by Theorem 5.7 in \cite{vanderVaart2000}, we have $\hat{\tau}_{x} \stackrel{\Prob}{\to} \tau_{x}$. 

\underline{Step 3}. The aim of this step is to show that $\hat{\tau}_{x} = \tau_{x} + O_{\Prob}((nh^{2})^{-1/3})$. We divide this step into three sub-steps. 

\underline{Step 3-(a)}. We begin with observing that, for any $\delta = \delta_{n} \to 0$, $Pg_{n,\tau}$ can be expanded as 
\[
P g_{n,\tau} = s_{x,n}(\tau) = s_{x,n}(\tau_{x}) + s_{n,x}'(\tau_{x}) (\tau-\tau_{x}) + (s_{x,n}''(\tau_{x}) /2+ o(1)) (\tau - \tau_{x})^{2}
\]
uniformly in $| \tau - \tau_{x} | < \delta$, 
and $s_{n,x}'(\tau_{x}) = \{ Q_{x}'(\tau_{x}+h) - Q_{x}'(\tau_{x}-h)\}/(2h)= O(h^{2})$,
where we have used the fact that $Q_{x}''(\tau_{x}) = s_{x}'(\tau_{x}) = 0$ (recall that $\tau_{x}$ is a minimizer of $s_{x}(\tau)$). Indeed, recalling that  $Q_{x}(\tau)$ is four times continuously differentiable in $\tau$, we have 
\[
\begin{split}
Q_{x}'(\tau_{x} + h) &= Q_{x}'(\tau_{x}) + \underbrace{Q_{x}''(\tau_{x})}_{=0} h + \frac{Q_{x}'''(\tau_{x})}{2} h^{2} + O(h^{3}),  \ \text{and likewise} \\
Q_{x}'(\tau_{x} - h) &= Q_{x}'(\tau_{x})  + \frac{Q_{x}'''(\tau_{x})}{2} h^{2} + O(h^{3}),
\end{split}
\]
which implies that $\{ Q_{x}'(\tau_{x}+h) - Q_{x}'(\tau_{x}-h)\}/(2h)= O(h^{2})$. 
Since $h^{2} = o((nh^{2})^{-1/3})$, 
using the inequality $|ab| \le (a^{2}+b^{2})/2$, we have
\[
| s_{n,x}'(\tau_{x}) (\tau-\tau_{x}) | \le  o(1) (\tau - \tau_{x})^{2} + o((nh^{2})^{-2/3}). 
\]
Further, $s_{x,n}''(\tau_{x}) = s_{x}''(\tau_{x})+ o(1)$, and so we have 
\begin{equation}
P (g_{n,\tau} - g_{n,\tau_{x}})  = (v_{x} + o(1)) (\tau - \tau_{x})^{2}  + o((nh^{2})^{-2/3})
\label{eq: identification}
\end{equation}
uniformly in $| \tau - \tau_{x} |  < \delta$, where $v_{x} = s_{x}''(\tau_{x})/2 > 0$. 

\underline{Step 3-(b)}. 
Next, for given $\delta > 0$, consider the function class $\mathcal{G}_{n,\delta} = \{ g_{n,\tau} - g_{n,\tau_{x}} : \tau \in [\varepsilon,1-\varepsilon], | \tau - \tau_{x} | < \delta \}$. 
It is seen that there exists a constant $C_{2}$ independent of $n$ and $\delta$ such that, whenever $|\tau - \tau_{x}| < \delta$,
\begin{equation}
\begin{split}
&| g_{n,\tau}(U,X) - g_{n,\tau_{x}} (U,X) | \\
&\le C_{2}\left [ \{ (1+\| X \|/h) \delta + (\| X \|/h) \{I(|U - \tau_{x}+h| \le \delta) + I(|U-\tau_{x}-h| \le \delta) \}  \right ] \\
&=: G_{n,\delta}(U,X).
\end{split}
\label{eq: envelope}
\end{equation}
Then there exist constants $A_{2}$ and $V_{2}$ independent of $n$ and $\delta$ such that 
\begin{equation}
\sup_{Q} N(\mathcal{G}_{n,\delta}, \| \cdot \|_{Q,2}, \eta \| G_{n,\delta} \|_{Q,2}) \le (A_{2}/\eta)^{V_{2}}, \ 0 < \forall \eta \le 1.
\label{eq: VC type}
\end{equation}
Again, this follows from a small modification to the proof of Lemma 3.1 in \cite{Ghosal2000}.

\underline{Step 3-(c)}. Finally, 
by consistency of $\hat{\tau}_{x}$, there exists $\delta = \delta_{n} \to 0$ such that $\Prob (| \hat{\tau}_{x}  - \tau_{x}| < \delta_{n}) \to 1$. 
In view of the expansion (\ref{eq: identification}), for sufficiently large $n$, we have
\[
P(g_{n,\tau} - g_{n,\tau_{x}}) \ge v_{x}(\tau-\tau_{x})^{2}/2-o((nh^{2})^{-2/3})
\]
uniformly in $| \tau - \tau_{x} | < \delta$. 
Further, by the covering number estimate of Step 3-(b) together with the maximal inequality (\ref{eq: simple maximal inequality}), we have
\[
\E\left [ \left \| \Prob_{n} g - Pg \right \|_{\mathcal{G}_{n,\delta}} \right ] = O(n^{-1/2}h^{-1}\delta^{1/2}),
\]
where we have used the fact that $PG_{n,\delta}^{2} =O(h^{-2}\delta)$. 
Now, a small modification to the proof of Theorem 3.2.5 in \cite{vdVW1996} shows that $| \hat{\tau}_{x} - \tau_{x}| = O_{\Prob}(r_{n}^{-1})$, where $r_{n}$ satisfies $r_{n}^{2}h^{-1}r_{n}^{-1/2} = n^{1/2}$, i.e., $r_{n} = (nh^{2})^{1/3}$. This completes Step 3.

\underline{Step 4}. 
Let $a_{n} = (nh^{2})^{1/3}$, and define
\[
\check{g}_{n,t}  = 
\begin{cases}
n^{1/6}h^{4/3} (g_{n,\tau_{x}+t/a_{n}} - g_{n,\tau_{x}}) & \text{if} \ \tau_{x}+t/a_{n} \in [\varepsilon,1-\varepsilon] \\
0 & \text{otherwise}
\end{cases}
. 
\]
Consider the empirical process
\[
\mathbb{G}_{n} \check{g}_{n,t} := \sqrt{n} (\Prob_{n} \check{g}_{n,t} - P\check{g}_{n,t}), \ t \in \R.
\]
Recall that $\sigma_{x}^{2} = \E[(x^{T}J(\tau_{x})^{-1} X)^{2}]/2$. 
The aim of this step is to show weak convergence of the empirical process $\{ \mathbb{G}_{n} \check{g}_{n,t} : t \in \R \}$ to $\{ \sigma_{x} B(t) : t \in \R \}$ in $\ell^{\infty}_{\mathrm{loc}}(\R)$, where 
$\ell^{\infty}_{\mathrm{loc}}(\R)$ is the space of locally bounded functions on $\R$ equipped with the metric $d(f,g) = \sum_{N=1}^{\infty} 2^{-N} (1 \wedge \| f-g \|_{[-N,N]})$; cf. Section 1.6 in \cite{vdVW1996}. This reduces to verifying (i) the finite dimensional convergence, i.e., for any $t_{1},\dots,t_{\ell} \in \R$,
\[
\left (\mathbb{G}_{n}\check{g}_{n,t_{1}},\dots,\mathbb{G}_{n}\check{g}_{n,t_{\ell}} \right )
\stackrel{d}{\to}\left (\sigma_{x}B(t_{1}),\dots,\sigma_{x}B(t_{\ell}) \right );
\]
 and (ii)  the asymptotic equicontinuity of the empirical process on $[-N,N]$ for each $N=1,2,\dots$, i.e., for any $\eta > 0$, 
\begin{equation}
\lim_{\delta \to 0} \limsup_{n \to \infty} \Prob \left ( \sup_{\substack{|t_{1}-t_{2}| < \delta \\ t_{1},t_{2} \in [-N,N]}} | \mathbb{G}_{n}(\check{g}_{n,t_{1}} - \check{g}_{n,t_{2}}) | > \eta \right ) = 0. 
\label{eq: asymptotic equicontinuity}
\end{equation}

To verify the finite dimensional convergence, we first compute the limit of the covariance of $\check{g}_{n,t_1}$ and $\check{g}_{n,t_2}$ for $t_1 \le t_2$. To this end,
let
\[
\varphi_{n,t} (U,X) = n^{1/6}h^{4/3}x^{T}J(\tau_{x})^{-1} X\{ K_{h}(U-\tau_{x}) - K_{h}(U-\tau_{x}-t/a_{n}) \}.
\]
Direct (but tedious) calculations show that $\Cov_{P} (\check{g}_{n,t_{1}},\check{g}_{n,t_{2}}) =P(\varphi_{n,t_{1}}\varphi_{n,t_{2}}) + o(1)$, where $\Cov_{P}$ denotes the covariance under $P$.
Since $X$ and $U$ are independent, we focus on computing 
\begin{equation}
\label{eq: covariance}
\begin{split}
&\E[ \{ K_{h}(U-\tau_{x}) - K_{h}(U-\tau_{x}-t_1/a_{n}) \} \{ K_{h}(U-\tau_{x}) - K_{h}(U-\tau_{x}-t_2/a_{n}) \}] \\
&=\frac{1}{4h^{2}} \Big (2h - \big| [(\tau_{x}+t_1/a_{n}) \pm h] \cap [\tau_{x} \pm h] \big| - \big| [(\tau_{x}+t_2/a_{n}) \pm h] \cap [\tau_{x} \pm h] \big| \\
&\qquad + \big| [(\tau_{x}+t_1/a_{n}) \pm h] \cap [(\tau_{x}+t_2/a_{n}) \pm h] \big| \Big ),
\end{split}
\end{equation}
where $[a \pm b] = [a-b,a+b]$ and $| \cdot |$ denotes the Lebesgue measure.
First, since $ha_n = (nh^{5})^{1/3} \to \infty$, for sufficiently large $n$, we have
\[
\big| [(\tau_{x}+t/a_{n}) \pm h] \cap [\tau_{x} \pm h] \big| = 2h -\frac{|t|}{a_{n}}.
\]
Next, if $t_{1} \le t_{2}$, then for sufficiently large $n$, we have
\[
\big| [(\tau_{x}+t_1/a_{n}) \pm h] \cap [(\tau_{x}+t_2/a_{n}) \pm h] \big| = 2h - \frac{t_{2}-t_{1}}{a_{n}}.
\]
Combining these estimates leads to
\[
\begin{split}
&2h - \big| [(\tau_{x}+t_1/a_{n}) \pm h] \cap [\tau_{x} \pm h] \big| - \big| [(\tau_{x}+t_2/a_{n}) \pm h] \cap [\tau_{x} \pm h] \big| \\
&\qquad + \big| [(\tau_{x}+t_1/a_{n}) \pm h] \cap [(\tau_{x}+t_2/a_{n}) \pm h] \big| \\
&=
\begin{cases}
\frac{2t_{1}}{a_{n}} & \text{if} \ 0 \le t_{1} \le t_{2} \\
\frac{-2t_{2}}{a_{n}} & \text{if} \ t_{1} \le t_{2} \le 0 \\
0 & \text{if} \ t_{1} \le 0 \le t_{2}
\end{cases}
.
\end{split}
\]
Since $a_{n}h^{2} = n^{1/3} h^{8/3}$, we conclude that 
\[
\lim_{n \to \infty}\Cov_{P} (\check{g}_{n,t_{1}},\check{g}_{n,t_{2}}) = \sigma_{x}^{2} 
\E[B(t_1)B(t_2)]
.
\]
The rest is to verify the Lindeberg condition, and to this end it is enough to verify that for any $t \in \R$ and $\eta > 0$, 
\[
n^{1/3}h^{8/3}PG_{n,|t|/a_{n}}^{2}I(n^{1/6}h^{4/3}G_{n,|t|/a_{n}} > \eta \sqrt{n}) \to 0,
\]
where $G_{n,\delta}$ is given in (\ref{eq: envelope}). After a few more calculations, we see that the problem boils down to showing that
\begin{equation}
a_{n} \E\left [ \| X \|^{2} I(|U-\tau_{x} \pm h| \le |t|/a_n) I(\| X \| > \eta n^{1/3}h^{-1/3}) \right ] \to 0.
\label{eq: Lindeberg condition}
\end{equation}
However, since $X$ and $U$ are independent, the left hand side on (\ref{eq: Lindeberg condition}) is
\[
\underbrace{a_{n} \Prob(|U-\tau_{x} \pm h| \le |t|/a_n)}_{=O(1)} \underbrace{\E \left [\| X \|^{2} I(\| X \| > \eta n^{1/3}h^{-1/3}) \right ]}_{=o(1)} \to 0.
\]
Therefore, we have proved the finite dimensional  convergence. 

To verify the asymptotic equicontinuity (\ref{eq: asymptotic equicontinuity}), consider the function class 
\[
\check{\mathcal{G}}_{n,\delta} = \left \{ \check{g}_{n,t_{1}} - \check{g}_{n,t_{2}} : | t_{1} - t_{2} | < \delta, t_{1},t_{2} \in [-N,N] \right \}.
\]
We will apply Lemma \ref{lem: maximal inequality} to the function class $\check{\mathcal{G}}_{n,\delta}$. First, 
an envelope function for $\check{\mathcal{G}}_{n,\delta}$ is given by $\check{G}_{n} = 2n^{1/6}h^{4/3}G_{n,N/a_{n}}$. Observe that, using independence between $U$ and $X$, we have
$P\check{G}_{n}^{2} = O(1)$ and 
\[
\E\left[ \max_{1 \le i \le n} \check{G}_{n}^{2}(U_{i},X_{i}) \right ] \le O(n^{1/3}h^{2/3}) \E \left [ \max_{1 \le i \le n} \| X_{i} \|^{2} \right ] = o(n^{5/6}h^{3/2}) = o(n),
\]
where we have used $\E \left [ \max_{1 \le i \le n} \| X_{i} \|^{2} \right ] = o(n^{1/2})$, which follows from Lemma \ref{lem: max convergence}. 

Next, from the covering number estimate (\ref{eq: VC type}),  there exist constants $A_{3}$ and $V_{3}$ independent of $n$ and $\delta$ such that 
\[
\sup_{Q} N(\check{\mathcal{G}}_{n,\delta}, \| \cdot \|_{Q,2}, \eta \| \check{G}_{n} \|_{Q,2}) \le (A_{3}/\eta)^{V_{3}}, \ 0 < \forall \eta \le 1.
\]

Finally, it is seen that there exists a constant $C_{3}$ independent of $n$ such that 
\[
P(g_{n,\tau_{1}} - g_{n,\tau_{2}})^{2} \le C_{3}|\tau_{1}-\tau_{2}|/h^{2}, \ \forall \tau_{1},\tau_{2} \in [\varepsilon,1-\varepsilon],
\]
which implies that 
\[
P(\check{g}_{n,t_{1}} - \check{g}_{n,t_{2}})^{2} \le C_{3}| t_{1} - t_{2} |, \ \forall t_{1},t_{2} \in [-N,N]
\]
for sufficiently large $n$. 

Therefore, applying Lemma \ref{lem: maximal inequality} to the function class $\check{\mathcal{G}}_{n,\delta}$, we conclude that there exists a constant $C_{4}$ independent of $n$ and $\delta$ such that 
\[
\E \left [ \sup_{\substack{|t_{1}-t_{2}| < \delta \\ t_{1},t_{2} \in [-N,N]}} | \mathbb{G}_{n}(\check{g}_{n,t_{1}} - \check{g}_{n,t_{2}}) | \right ] \le C_{4}\sqrt{\delta \log (1/\delta)} + o(1) \log(1/\delta)
\]
for sufficiently small $\delta$, where the $o(1)$ term is independent of $\delta$. 
This leads to the asymptotic equicontinuity (\ref{eq: asymptotic equicontinuity}) by Markov's inequality.

\underline{Step 5}. We derive the limit distribution of $\hat{\tau}_{x}$ by applying Theorem 2.7 in \cite{KimPollard1990}. 
The optimality condition (\ref{eq: optimality condition}) implies that the rescaled  estimator $\hat{t} = (nh^{2})^{1/3} (\hat{\tau}_{x} - \tau_{x})$ satisfies 
\[
\sqrt{n} \Prob_{n} (-\check{g}_{n,\hat{t}}) \ge \sup_{t \in \R} \sqrt{n} \Prob_{n} (-\check{g}_{n,t}) - o_{\Prob} (1).
\]
In view of the expansion (\ref{eq: identification}), we have 
\[
\sqrt{n}P\check{g}_{n,t} = v_{x}t^{2} + o(1)
\]
locally uniformly in $t \in \R$, i.e., uniformly in $t \in [-N,N]$ for each $N=1,2,\dots$. 
From the weak convergence result of Step 4, together with the fact that $B \stackrel{d}{=} -B$, the non-centered empirical process $\{ \sqrt{n} \Prob_{n} (-\check{g}_{n,t}) : t \in \R \}$ converges weakly to the process $\{ \sigma_{x}B(t) - v_{x}t^{2} : t \in \R \}$ in $\ell_{\mathrm{loc}}^{\infty}(\R)$, and the limit process concentrates on $C_{\max}(\R)$ (as defined in \cite{KimPollard1990}) by Lemmas 2.5 and 2.6 in \cite{KimPollard1990}. 
Further, $\hat{t} = O_{\Prob}(1)$ by Step 3. Therefore, by Theorem 2.7 in \cite{KimPollard1990}, we have
\[
\hat{t} = (nh^{2})^{1/3} (\hat{\tau}_{x} - \tau_{x}) \stackrel{d}{\to} \argmax_{t \in \R} \left \{ \sigma_{x} B(t) - v_{x} t^{2} \right \}.
\]
The right hand side is equal in distribution to $(\sigma_{x}/v_{x})^{2/3} Z$ by Problem 3.2.5 in \cite{vdVW1996}, where $Z = \argmax_{t \in \R} \{ B(t) - t^{2} \}$. This leads to the first result (\ref{eq: first result}) of the theorem.

Finally, observe that
\[
\hat{m}(x) - m(x) = \hat{Q}_{x}(\hat{\tau}_{x}) - Q_{x}(\tau_{x}) = \hat{Q}_{x}(\hat{\tau}_{x}) - Q_{x}(\hat{\tau}_{x}) + Q_{x}(\hat{\tau}_{x}) - Q_{x}(\tau_{x}).
\]
By Lemma \ref{lem: Bahadur}, 
\[
|\hat{Q}_{x}(\hat{\tau}_{x}) - Q_{x}(\hat{\tau}_{x})| \le \| \hat{Q}_{x} - Q_{x} \|_{[\varepsilon,1-\varepsilon]}
\le \| x \| \| \hat{\beta} - \beta \|_{[\varepsilon,1-\varepsilon]} = O_{\Prob}(n^{-1/2}).
\]
Applying the delta method, we have 
\[
(nh^{2})^{1/3} (\hat{m}(x) - m(x)) = (nh^{2})^{1/3}(Q_{x}(\hat{\tau}_{x}) - Q_{x}(\tau_{x})) + o_{\Prob}(1) \stackrel{d}{\to} s_{x}(\tau_{x})(\sigma_{x}/v_{x})^{2/3} Z.
\]
This completes the proof.
\end{proof}

\begin{proof}[Proof of Lemma \ref{lem: Bahadur}]
The results (\ref{eq: score process}) and $\| \hat{\beta} - \beta \|_{[\varepsilon/2,1-\varepsilon/2]}=O_{\Prob}(n^{-1/2})$ follow from Theorem 3 in \cite{ACF2006}.
By the first order condition for the quantile regression problem (\ref{eq: QR problem}), we have 
\begin{align}
&\left \| \sum_{i=1}^{n} \{ \tau - I(Y_{i} \le X_{i}^{T} \hat{\beta}(\tau)) \} X_{i} \right \| \le \Card (\{ i \in \{ 1,\dots,n \}: Y_{i} = X_{i}^{T} \hat{\beta}(\tau) \}) \max_{1 \le i \le n} \| X_{i} \|, \ \text{and} \label{eq: FOC} \\
&\sup_{\tau \in [\varepsilon/2,1-\varepsilon/2]} \Card (\{ i \in \{1,\dots,n \} : Y_{i} = X_{i}^{T} \hat{\beta}(\tau) \}) \le d \quad \text{almost surely}. \label{eq: fit}
\end{align}
The first result (\ref{eq: FOC}) follows from a modification to the proof of Lemma 2.1 in \cite{ElAttar1979}; see Lemma \ref{lem: FOC} ahead. The second result (\ref{eq: fit}) follows from the following observation. Pick any subset $I \subset \{ 1,\dots, n \}$ such that $\Card (I) \ge d+1$. Conditionally on $X_{1}^{n} = \{ X_{1},\dots,X_{n} \}$, consider the set 
\[
S_{I} = \{ (X_{i}^{T} \beta)_{i \in I} : \beta \in \R^{d} \} \subset \R^{\Card (I)},
\]
which is a linear subspace of dimension at most $d$. 
If there exists $\tau \in [\varepsilon/2,1-\varepsilon/2]$ such that $Y_{i} = X_{i}^{T} \hat{\beta}(\tau)$ for all $i \in I$, then $(Y_{i})_{i \in I} \in S_{I}$, so that 
\begin{equation}
\begin{split}
&\Prob (\text{ there exists $\tau \in [\varepsilon/2,1-\varepsilon/2]$ such that $Y_{i} = X_{i}^{T} \hat{\beta}(\tau)$ for all $i \in I$} \mid X_{1}^{n}) \\
&\quad \le \Prob ( (Y_{i})_{i \in I} \in S_{I} \mid X_{1}^{n}). 
\end{split}
\label{eq: fit2}
\end{equation}
However, since  the distribution of $(Y_{i})_{i \in I}$ conditionally on $X_{1}^{n}$ is absolutely continuous, the conditional probability on the right hand side is $0$. 
By Fubini's theorem, the unconditional probability of the event inside the conditional probability on the left hand side of (\ref{eq: fit2}) is $0$. Now,
\[
\begin{split}
&\Prob\left(\sup_{\tau \in [\varepsilon/2,1-\varepsilon/2]} \Card (\{ i \in \{ 1,\dots, n \}: Y_{i} = X_{i}^{T} \hat{\beta}(\tau) \}) \ge d+1 \right)  \\
&\le \sum_{\substack{I \subset \{ 1,\dots, n \} \\ \Card (I) \ge d+1}} \Prob (\text{ there exists $\tau \in [\varepsilon/2,1-\varepsilon/2]$ such that $Y_{i} = X_{i}^{T} \hat{\beta}(\tau)$ for all $i \in I$}) = 0,
\end{split}
\]
which leads to the result (\ref{eq: fit}).

Since $\E[\| X \|^{4}] < \infty$, we have $\max_{1 \le i \le n} \| X_{i}\| = o_{\Prob}(n^{1/4})$ (cf. Lemma \ref{lem: max convergence}), and so
\[
\left \| \frac{1}{n}\sum_{i=1}^{n} \{ \tau - I(Y_{i} \le X_{i}^{T} \hat{\beta}(\cdot)) \} X_{i} \right \|_{[\varepsilon/2,1-\varepsilon/2]} = o_{\Prob}(n^{-3/4}).
\]
We will expand $n^{-1} \sum_{i=1}^{n} \{ \tau - I(Y_{i} \le X_{i}^{T} \hat{\beta}(\tau)) \} X_{i}$. Observe that 
\[
\begin{split}
&\frac{1}{n} \sum_{i=1}^{n} \{ \tau - I(Y_{i} \le X_{i}^{T} \hat{\beta}(\tau)) \} X_{i}
= \frac{1}{n} \sum_{i=1}^{n} \{ \tau - I(Y_{i} \le X_{i}^{T} \beta (\tau)) \} X_{i} + \E[  \{ \tau - I(Y \le X^{T} \beta) \} X]|_{\beta = \hat{\beta}(\tau)} \\
&\qquad + \frac{1}{n} \sum_{i=1}^{n}\{ I(Y_{i} \le X_{i}^{T}\beta(\tau)) - I(Y_{i} \le X_{i}^{T}\hat{\beta}(\tau)) \} X_{i} -  \E[  \{ \tau - I(Y \le X^{T} \beta) \} X]|_{\beta = \hat{\beta}(\tau)}
\end{split}
\]
The Taylor expansion yields that 
\[
\E[  \{ \tau - I(Y \le X^{T} \beta) \} X]|_{\beta = \hat{\beta}(\tau)} = -J(\tau)(\hat{\beta}(\tau) - \beta(\tau)) + O_{\Prob}(n^{-1})
\]
uniformly in $\tau \in [\varepsilon/2,1-\varepsilon/2]$. 
It remains to show that 
\begin{equation}
\begin{split}
&\left \|  \frac{1}{n} \sum_{i=1}^{n}\{ I(Y_{i} \le X_{i}^{T}\beta(\tau)) - I(Y_{i} \le X_{i}^{T}\hat{\beta}(\tau)) \} X_{i} -  \E[  \{ \tau - I(Y \le X^{T} \beta) \} X]|_{\beta = \hat{\beta}(\tau)} \right \|_{[\varepsilon/2,1-\varepsilon/2]} \\
&\quad = o_{\Prob}(n^{-3/4} \log n). 
\label{eq: residual process}
\end{split}
\end{equation}
Since $\| \hat{\beta} - \beta \|_{[\varepsilon/2,1-\varepsilon/2]} = O(n^{-1/2})$, for any $M_{n} \to \infty$ sufficiently slowly, $\Prob (\| \hat{\beta} - \beta \|_{[\varepsilon/2
,1-\varepsilon/2]} \le M_{n}n^{-1/2}) \to 1$.
Consider the function class
\[
\mathcal{F}_{n} = \left\{ (y,x) \mapsto \{ I(y \le x^{T}\beta) - I(y \le x^{T}(\beta + \delta)) \}\alpha^{T}x : \beta \in \R^{d}, \| \delta \| \le M_{n}n^{-1/2}, \alpha \in \mathbb{S}^{d-1} \right\}, 
\]
where $\mathbb{S}^{d-1} = \{ x \in \R^{d} : \| x \| = 1 \}$. Then the left side on (\ref{eq: residual process}) is bounded by 
\begin{equation}
\left \| \frac{1}{n} \sum_{i=1}^{n} f(Y_{i},X_{i}) - \E[f(Y,X)] \right \|_{\mathcal{F}_{n}}
\label{eq: residual process bound}
\end{equation}
with probability approaching one. 
Since the function class $\{ (y,x) \mapsto I(y \le x^{T}\beta) \alpha^{T}x : \beta \in \R^{d}, \alpha \in \mathbb{S}^{d-1} \}$ (that is independent of $n$) is a VC subgraph class with envelope $F(y,x) = \| x \|$, there exist constants $A$ and $V$ independent of $n$ such that 
\[
\sup_{Q} N(\mathcal{F}_{n}, \| \cdot \|_{Q,2}, \eta \| F \|_{Q,2}) \le (A/\eta)^{V}, \ 0 < \forall \eta \le 1.
\]
See Section 2.6 in \cite{vdVW1996}. Simple calculations show that 
\[
\begin{split}
&\sup_{f \in \mathcal{F}_{n}} \E[f^{2}(Y,X)] = O(M_{n}n^{-1/2}) \quad \text{and} \\
&\E\left[\max_{1 \le i \le n} F^{2}(Y_{i},X_{i}) \right] = \E\left[ \max_{1 \le i \le n}\| X_{i} \|^{2}\right] = o(n^{1/2})
\end{split}
\]
by Lemma \ref{lem: max convergence}. Therefore, applying Lemma \ref{lem: maximal inequality} to the function class $\mathcal{F}_{n}$ shows that the expectation of the term (\ref{eq: residual process bound}) is bounded by
\[
O(n^{-3/4} \sqrt{M_{n} \log n}) + o(n^{-3/4} \log n).
\]
Choosing $M_{n} \to \infty$ sufficiently slowly, we obtain the desired result. 
\end{proof}

\begin{lemma}
\label{lem: FOC}
Let $(y_1,x_1),\dots,(y_n,x_n) \in \R \times \R^{d}$ be pairs of outcome variables and regressors. Consider to solve the quantile regression problem: 
\begin{equation}
\min_{\beta \in \R^{d}} \sum_{i=1}^{n} \rho_{\tau}(y_{i}-x_{i}^{T}\beta),
\label{eq: QR}
\end{equation}
where $\tau \in (0,1)$ is fixed. Let $\beta^{*}$ be an optimal solution to (\ref{eq: QR}) and let $I^{*} = \{ i \in \{1,\dots, n\}: y_{i} = x_{i}^{T}\beta^{*} \}$. Then there exist $a_{i} \in [-1,0]$ for $i \in I^{*}$ such that 
\[
\sum_{i=1}^{n} \{ \tau - I(y_{i} \le x_{i}^{T}\beta^{*}) \}x_{i} = \sum_{i \in I^{*}} a_{i}x_{i}.
\]
Hence we have $\| \sum_{i=1}^{n} \{ \tau - I(y_{i} \le x_{i}^{T}\beta^{*}) \}x_{i} \| \le \Card (I^{*}) \max_{1 \le i \le n} \| x_{i} \|$.
\end{lemma}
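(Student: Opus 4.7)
The approach is standard convex subdifferential calculus applied to the check-function objective. Since $\rho_\tau$ is a convex function of a scalar, the objective $F(\beta) = \sum_{i=1}^{n} \rho_\tau(y_i - x_i^T \beta)$ is convex in $\beta$, so any optimal solution $\beta^{*}$ satisfies the first-order condition $0 \in \partial F(\beta^{*})$. The plan is to make this condition explicit, identify which subgradient components are forced versus flexible, and read off the desired representation.

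First I would record the subdifferential of $\rho_\tau$: it equals $\{\tau\}$ on $(0,\infty)$, $\{\tau - 1\}$ on $(-\infty,0)$, and the full interval $[\tau - 1, \tau]$ at $0$. Applying the standard chain rule for subdifferentials through the affine map $\beta \mapsto y_i - x_i^T \beta$ gives $\partial F(\beta) = -\sum_{i=1}^{n} \partial \rho_\tau(y_i - x_i^T \beta)\, x_i$. The optimality condition $0 \in \partial F(\beta^{*})$ then produces scalars $g_i \in \partial \rho_\tau(y_i - x_i^T \beta^{*})$, $i=1,\dots,n$, with
\[
\sum_{i=1}^{n} g_i x_i = 0.
\]

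Next I would split the sum according to whether $i \in I^{*}$. For $i \notin I^{*}$, the subgradient is uniquely determined and one checks directly that $g_i = \tau - I(y_i \le x_i^T \beta^{*})$ (equal to $\tau$ when $y_i > x_i^T \beta^{*}$ and to $\tau - 1$ when $y_i < x_i^T \beta^{*}$). For $i \in I^{*}$, we have $\tau - I(y_i \le x_i^T \beta^{*}) = \tau - 1$ while $g_i$ may be any element of $[\tau - 1, \tau]$. Substituting and using $\sum_{i=1}^{n} g_i x_i = 0$ yields
\[
\sum_{i=1}^{n} \{\tau - I(y_i \le x_i^T \beta^{*})\} x_i = \sum_{i \in I^{*}} \{(\tau - 1) - g_i\} x_i = \sum_{i \in I^{*}} a_i x_i,
\]
with $a_i := \tau - 1 - g_i \in [-1, 0]$. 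The norm bound then follows from the triangle inequality and $|a_i| \le 1$.

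I do not anticipate any real obstacle: the only points to handle carefully are the sign convention arising from the affine composition (which produces the $-x_i$ factor but leaves the scalar subgradient range $[\tau - 1, \tau]$ intact) and the implicit use of additivity of the subdifferential across the finite sum of convex functions, which is immediate here since each summand is everywhere finite.
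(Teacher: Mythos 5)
Your proof is correct, but it takes a genuinely different route from the paper's. You work directly with the convex, nonsmooth objective $F(\beta)=\sum_i \rho_\tau(y_i-x_i^T\beta)$, invoke the Moreau--Rockafellar sum rule and the affine chain rule to write $0\in\partial F(\beta^*)$ as $\sum_i g_i x_i=0$ with $g_i\in\partial\rho_\tau(y_i-x_i^T\beta^*)$, and then read off the representation by noting that $g_i$ is forced to equal $\tau - I(y_i\le x_i^T\beta^*)$ off $I^*$ and ranges over $[\tau-1,\tau]$ on $I^*$; the bookkeeping $a_i=(\tau-1)-g_i\in[-1,0]$ and the triangle inequality finish the argument. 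The paper instead reformulates (\ref{eq: QR}) as the linear program (\ref{eq: QRLP}) in the doubled variables $(u,v,\beta)$, applies the Karush--Kuhn--Tucker theorem to obtain multipliers $\mu_i\ge 0$, $\lambda_i$, and uses complementary slackness to pin down $\lambda_i=-\tau$ on $I_+^*$, $\lambda_i=1-\tau$ on $I_-^*$, and $\lambda_i\in[-\tau,1-\tau]$ on $I^*$, arriving at the same identity with $a_i=\lambda_i-1+\tau$. The two arguments are essentially dual descriptions of the same optimality condition: your subgradients $g_i$ correspond to $-\lambda_i$ in the paper's notation. What your approach buys is brevity and the avoidance of the LP machinery (no variable splitting, no explicit multiplier analysis); what the paper's buys is an explicit tie to the linear programming formulation that it emphasizes elsewhere for computational reasons, and a derivation attributed to a modification of El Attar et al. Both the forced values of the subgradient off $I^*$ and the interval $[-1,0]$ for $a_i$ check out in your version, and the sum rule applies without qualification since each summand is finite-valued on all of $\R^d$, so there is no gap.
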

\begin{proof}
Let $y=(y_{1},\dots,y_{n})^{T}$ and $\mathbb{X} = [x_{1},\dots,x_{n}]^{T}$.  The optimization problem (\ref{eq: QR}) reduces to the following linear programming problem:
\begin{equation}
\label{eq: QRLP}
\begin{split}
&\min_{u, v \in \R^{n}, \beta \in \R^{d}} \tau  1_{n}^{T}  u + (1-\tau)  1_{n}^{T}  v \\
&\quad \text{s.t.} \  u- v= y- \mathbb{X}\beta, \  u \ge  0_{n}, \  v \ge  0_{n}, 
\end{split}
\end{equation}
where $1_{n} = (1,\dots,1)^{T} \in \R^{n}$ and $0_{n}=(0,\dots,0)^{T} \in \R^{n}$. The inequalities $u \ge  0_{n}$ and $v \ge  0_{n}$ are  interpreted coordinatewise. 
Let $u_{i}^{*} = \max \{ y_{i} -  x_{i}^{T}  \beta^{*}, 0 \}$ and $v_{i}^{*} = \max \{ -y_{i} +  x_{i}^{T}  \beta^{*}, 0 \}$.
Then $u^{*} -  v^{*}=  y -  \mathbb{X}\beta^{*}$ and $(u^{*},v^{*},\beta^{*})$ is an optimal solution to the problem (\ref{eq: QRLP}). 
Defining
\begin{align*}
&f(u,v,\beta) = \tau  1_{n}^{T}u + (1-\tau)1_{n}^{T}v, \\
&g(u,v,\beta) = (g_{1}(u,v,\beta),\dots,g_{2n}(u,v,\beta))^{T}=(-u^{T},-v^{T})^{T}, \\
&h(u,v,\beta)=(h_{1}(u,v,\beta),\dots,h_{n}(u,v,\beta))^{T}= u-v-y+\mathbb{X}\beta,
\end{align*}
the problem (\ref{eq: QRLP}) can be written as
\begin{align*}
&\min_{u,v\in \R^{n},\beta \in \R^{d}} f(u,v,\beta) \\
&\quad \text{s.t.} \  g(u,v,\beta) \le {0}_{2n}, \  h(u,v,\beta) = 0_{n}.
\end{align*}
Let $e_{i} \in \R^{n}$ denote the vector of which only the $i$-th element is $1$ and the other elements are all zero. 
Then the gradient vectors of $f(u,v,\beta), \ g_{i}(u,v,\beta), \ g_{n+i}(u,v,\beta)$, and $h_{i}(u,v,\beta)$ are given by
\begin{align*}
&\nabla f(u,v,\beta) = 
\begin{pmatrix}
\tau  1_{n} \\
(1-\tau) 1_{n} \\
0_{d}
\end{pmatrix}, \
\nabla g_{i}( u, v, \beta) = 
\begin{pmatrix}
- e_{i} \\
 0_{n} \\
 0_{d}
\end{pmatrix}, \\
&\nabla g_{n+i}( u, v, \beta) =
\begin{pmatrix}
 0 \\
- e_{i} \\
 0_{d}
\end{pmatrix}, \
\nabla h_{i}( u, v, \beta) = 
\begin{pmatrix}
 e_{i} \\
- e_{i} \\
 x_{i} 
\end{pmatrix}, \ i=1,\dots,n.
\end{align*}
Since all the constraints are linear, by the Karush-Kuhn-Tucker theorem (cf. \cite{Bertsekas1999}, Proposition 3.3.7), there exist $\mu_{1},\dots,\mu_{2n} \ge 0$ and $\lambda_{1},\dots,\lambda_{n} \in \R$ such that 
\begin{align}
&\begin{pmatrix}
\tau  1_{n} \\
(1-\tau) 1_{n} \\
0_{d}
\end{pmatrix}
+ 
\sum_{i=1}^{n} \mu_{i} 
\begin{pmatrix}
- e_{i} \\
 0_{n} \\
 0_{d}
\end{pmatrix}
+
\sum_{i=1}^{n} \mu_{n+i}
 \begin{pmatrix}
 0_{n} \\
- e_{i} \\
 0_{d}
\end{pmatrix}
+
\sum_{i=1}^{n} \lambda_{i}
\begin{pmatrix}
 e_{i} \\
- e_{i} \\
 x_{i} 
\end{pmatrix}
=
 0_{2n+d}, \label{eq: KKT} \\
 &\mu_{i} u^{*}_{i} = 0, \quad \text{and} \quad \mu_{n+i} v^{*}_{i}=0, \ i=1,\dots,n. \label{eq: slack}
\end{align}

Recall that $I^{*} = \{ i \in \{ 1,\dots, n \} : y_{i}= x_{i}^{T}  \beta^{*} \}$. Let $I_{+}^{*} = \{ i \in \{1,\dots,n \} : y_{i} >  x_{i}^{T}  \beta^{*} \}$ and $I_{-}^{*} = \{ i \in \{ 1,\dots,n \} : y_{i} <  x_{i}^{T}  \beta^{*} \}$.
Observe that from the complementary slack condition (\ref{eq: slack}), 
\begin{align*}
&i \in I_{+}^{*} \Rightarrow u_{i}^{*} > 0 \Rightarrow \mu_{i}=0 \Rightarrow \lambda_{i}=-\tau \quad \text{and} \\
&i \in I_{-}^{*} \Rightarrow v_{i}^{*} > 0 \Rightarrow \mu_{n+i}=0 \Rightarrow \lambda_{i}=1-\tau.
\end{align*}
The last $d$ equations in (\ref{eq: KKT}) imply that $\sum_{i=1}^{n} \lambda_{i} x_{i} = 0$, which can be rearranged as $\tau  \sum_{i \in I_{+}^{*}}  x_{i} + (\tau -1) \sum_{i \in I_{-}^{*}}  x_{i} = \sum_{i \in I^{*}} \lambda_{i} x_{i}$.
The left hand side is 
\[
\sum_{i \in I^{*}_{+} \cup I_{-}^{*}} \{ \tau - I(y_{i} \le  x_{i}^{T}  \beta^{*}) \}  x_{i} = \sum_{i=1}^{n} \{ \tau - I(y_{i} \le  x_{i}^{T}  \beta^{*}) \}  x_{i}  + (1-\tau) \sum_{i \in I^{*}}  x_{i},
\]
so that 
\[
\sum_{i=1}^{n} \{ \tau - I(y_{i} \le  x_{i}^{T}  \beta^{*}) \}  x_{i} = \sum_{i \in I^{*}} \underbrace{( \lambda_{i} -1+\tau)}_{=a_{i}}  x_{i}.
\]
For $i \in I^{*}$, we have by the first $2n$ equations of (\ref{eq: KKT}), 
\begin{align*}
&\tau - \mu_{i} + \lambda_{i} = 0 \Rightarrow \lambda_{i} \ge - \tau \quad \text{and} \\
&1-\tau-\mu_{n+i} - \lambda_{i} = 0 \Rightarrow \lambda_{i} \le 1 - \tau,
\end{align*}
so that $\lambda_{i} \in [-\tau,1-\tau]$ for $i \in I^{*}$, i.e, $a_{i} \in [-1,0]$ for $i \in I^{*}$. This completes the proof. 
\end{proof}

\subsection{Proof of Corollary \ref{cor: multivariate}}

The second result follows from the delta method (see the proof of Theorem \ref{thm: limiting distributions}), so we focus on proving the first result. 
We will follow the notation used in the proof of Theorem \ref{thm: limiting distributions}, but to make the dependence on $x$ explicit, 
let us write $g_{n,x,\tau} (U,X) = s_{x,n} (\tau) + x^{T} J(\tau)^{-1} X\{ 1-K_{h}(U-\tau) \}$, 
\[
\check{g}_{n,x,t}  = 
\begin{cases}
n^{1/6}h^{4/3} (g_{n,x,\tau_{x}+t/a_{n}} - g_{n,x,\tau_{x}}) & \text{if} \ \tau_{x}+t/a_{n} \in [\varepsilon,1-\varepsilon] \\
0 & \text{otherwise}
\end{cases}
,
\]
and $\varphi_{n,x,t} (U,X) = n^{1/6}h^{4/3} x^{T}J(\tau_{x})^{-1}X\{ K_{h}(U-\tau_{x}) - K_{h}(U-\tau_{x} -t/a_{n}) \}$. 
Recall that $a_{n} = (nh^{2})^{1/3}$. 

We begin with observing that for $\hat{t}_{j} = (nh^{2})^{1/3} (\hat{\tau}_{x^{j}} - \tau_{x^{j}}), j=1,\dots,L$, 
\[
\begin{split}
\sqrt{n}\Prob_{n} \left (-\sum_{j=1}^{L} \check{g}_{n,x^{j},\hat{t}_{j}}  \right ) &\ge \sup_{(t_{1},\dots,t_{L})^{T} \in \R^{L}} \sqrt{n} \Prob_{n} \left (-\sum_{j=1}^{L} \check{g}_{n,x^{j},t_{j}}  \right )  - o_{\Prob}(1), \quad \text{and} \\
\sqrt{n}P \left ( \sum_{j=1}^{L} \check{g}_{n,x^{j},t_{j}} \right ) &= \sum_{j=1}^{L} v_{x^{j}} t_{j}^{2} + o(1) \quad \text{locally uniformly in $(t_{1},\dots,t_{L})^{T} \in \R^{L}$}. 
\end{split}
\]
In addition, from Theorem \ref{thm: limiting distributions}, we know that $\hat{t}_{j} = O_{\Prob}(1)$ for each $j=1,\dots,L$. 
Hence, in view of Theorem 2.7 in \cite{KimPollard1990}, we only have to verify the following. Let $\ell^{\infty}_{\mathrm{loc}}(\R^{L})$  denote the space of all locally bounded functions on $\R^{L}$ equipped with the metric $d(f,g) = \sum_{N=1}^{\infty} 2^{-N} (1 \wedge \| f -g \|_{[-N,N]^{L}})$. Recall that $\mathbb{G}_{n} g = \sqrt{n}(\Prob_n g - Pg)$. 
\begin{enumerate}
\item[(i)] There exists a continuous version of $\mathbb{B}_{k}$ for each $k=1,\dots,M$, and the stochastic process $\{ \mathbb{G}_{n}(\sum_{j=1}^{L} \check{g}_{n,x^{j},t_{j}}) : (t_{1},\dots,t_{L})^{T} \in \R^{L} \}$ converges weakly to the process $\{ \sum_{k=1}^{M} \mathbb{B}_{k}((t_{j})_{j \in S_{k}}) : (t_{1},\dots,t_{L})^{T} \in \R^{L} \}$ in $\ell_{\mathrm{loc}}^{\infty}(\R^{L})$, where $\mathbb{B}_{1},\dots,\mathbb{B}_{M}$ are independent.
\item[(ii)] For each $k=1,\dots,M$, the process 
\[
(t_{j})_{j \in S_{k}} \mapsto \mathbb{B}_{k}((t_{j})_{j \in S_{k}}) -\sum_{j \in S_{k}} v_{x^{j}}t_{j}^{2}
\]
admits a unique maximizer almost surely. 
\end{enumerate}
The latter (ii) follows from Lemmas 2.5 and 2.6 in \cite{KimPollard1990}, so we focus on verifying the weak convergence (i). By Section 1.6 in \cite{vdVW1996}, this boils down to verifying the finite dimensional convergence together with the asymptotic equicontinuity on each $[-N,N]^{L}$, i.e., for any $\eta > 0$,
\begin{equation}
\lim_{\delta \to 0} \limsup_{n \to \infty} \Prob \left (  \sup_{\substack{|t_{j} - t_{j}'| < \delta \\ t_{j},t_{j}' \in [-N,N], j=1,\dots,L} } \left| \mathbb{G}_{n}\left ( \sum_{j=1}^{L} (\check{g}_{n,x^{j},t_{j}} - \check{g}_{n,x^{j},t_{j}'}) \right ) \right | > \eta \right ) = 0. 
\label{eq: AEC}
\end{equation}
As we will see, the finite dimensional convergence and the asymptotic equicontinuity automatically imply the existence of a continuous version of $\mathbb{B}_{k}$ for each $k=1,\dots,M$. 

The asymptotic equicontinuity (\ref{eq: AEC}) follows from the fact that $|\mathbb{G}_{n}(\sum_{j=1}^{L} (\check{g}_{n,x^{j},t_{j}} - \check{g}_{n,x^{j},t_{j}'}))| \le \sum_{j=1}^{L} |\mathbb{G}_{n}(\check{g}_{n,x^{j},t_{j}} - \check{g}_{n,x^{j},t_{j}'})|$ and what we have proved in Step 4 in the proof of Theorem \ref{thm: limiting distributions}. It remains to prove the finite dimensional convergence. 
Direct calculations show that 
\[
\Cov_{P}\left ( \sum_{i=1}^{L} \check{g}_{n,x^{i},t_{i}}, \sum_{j=1}^{L} \check{g}_{n,x^{j},t_{j}'} \right ) = \sum_{i,j=1}^{L} P (\varphi_{n,x^{i},t_{i}} \varphi_{n,x^{j},t_{j}'}) + o(1)
\]
for any $(t_{1},\dots,t_{L})^{T}, (t_{1}',\dots,t_{L}')^{T} \in \R^{L}$. 
Consider first the case where $\tau_{x^{i}} = \tau_{x^{j}} = \tau_{(k)}$ for some $k=1,\dots,M$. Then, from the calculation done in Step 4 in the proof of Theorem \ref{thm: limiting distributions}, we see that 
\[
\lim_{n \to \infty} P (\varphi_{n,x^{i},t_{i}} \varphi_{n,x^{j},t_{j}'}) = \frac{1}{2}(x^{i})^{T} J(\tau_{(k)})^{-1} \E[XX^{T}] J(\tau_{(k)})^{-1} x^{j} \E[B(t_{i})B(t_{j}')]. 
\]
Next, consider the case where $\tau_{x^{i}} \neq \tau_{x^{j}}$. Then, the intervals $[\tau_{x^{i}} \pm h]$ and $[(\tau_{x^{i}} + t_{i}/a_{n}) \pm h]$ have empty intersections with $[\tau_{x^{j}} \pm h]$ and $[(\tau_{x^{j}} + t_{j}/a_{n}) \pm h]$ for sufficiently large $n$, so that 
\[
\lim_{n \to \infty} P (\varphi_{n,x^{i},t_{i}} \varphi_{n,x^{j},t_{j}'}) = 0.
\]
Conclude that 
\begin{equation}
\begin{split}
&\lim_{n \to \infty}\Cov_{P}\left ( \sum_{i=1}^{L} \check{g}_{n,x^{i},t_{i}}, \sum_{j=1}^{L} \check{g}_{n,x^{j},t_{j}'} \right ) \\
\quad &=\frac{1}{2} \sum_{k=1}^{M} \sum_{i,j \in S_{k}}(x^{i})^{T} J(\tau_{(k)})^{-1} \E[XX^{T}] J(\tau_{(k)})^{-1} x^{j} \E[B(t_{i})B(t_{j}')].
\label{eq: fidi}
\end{split}
\end{equation}
The Lindeberg condition can be verified in a similar way to Step 4 in the proof of Theorem \ref{thm: limiting distributions}, so we have proved the finite dimensional convergence. 

Now, for each $k=1,\dots,M$,  since $\mathbb{G}_{n}(\sum_{j=1}^{L} \check{g}_{n,x^{j},t_{j}}) \big |_{t_{j}=0, j \notin S_{k}} = \mathbb{G}_{n} (\sum_{j \in S_{k}} \check{g}_{n,x^{j},t_{j}})$, we see that the process $(t_{j})_{j \in S_{k}} \mapsto \mathbb{G}_{n} (\sum_{j \in S_{k}} \check{g}_{n,x^{j},t_{j}})$ is asymptotically equicontinuous  (with respect to the Euclidean metric) on $[-N,N]^{s_{k}}$ for each $N=1,2,\dots$ and the finite dimensional distributions converge weakly to those of $\mathbb{B}_{k}$. By the final paragraph in Section 1.6 of \cite{vdVW1996}, the limit process (in $\ell^{\infty}_{\mathrm{loc}} (\R^{s_{k}})$) is a version of $\mathbb{B}_{k}$ with continuous paths. 

We have already seen that the process $\{ \mathbb{G}_{n}(\sum_{j=1}^{L} \check{g}_{n,x^{j},t_{j}}) : (t_{1},\dots,t_{L})^{T} \in \R^{L} \}$ is weakly convergent in $\ell_{\mathrm{loc}}^{\infty}(\R^{L})$. The rest is to verify that the limit process is $\{ \sum_{k=1}^{M} \mathbb{B}_{k}((t_{j})_{j \in S_{k}}) : (t_{1},\dots,t_{L})^{T} \in \R^{L} \}$ where $\mathbb{B}_{1},\dots,\mathbb{B}_{M}$ are independent, which however follows from the fact that the right hand side on (\ref{eq: fidi}) is identical to $\Cov(\sum_{k=1}^{M} \mathbb{B}_{k}((t_{i})_{i \in S_{k}}),\sum_{k=1}^{M} \mathbb{B}_{k}((t_{j}')_{j \in S_{k}}))$. 
This completes the proof. \qed

\subsection{Proof of Proposition \ref{prop: inference}}

The consistency of $\hat{s}_{x}(\hat{\tau}_{x})$ follows from the uniform consistency of $\hat{s}_{x}(\tau)$ on $[\varepsilon,1-\varepsilon]$, i.e., $\| \hat{s}_{x} - s_{x} \|_{[\varepsilon,1-\varepsilon]} \stackrel{\Prob}{\to} 0$, which is established in Steps 1 and 2 in the proof of Theorem \ref{thm: limiting distributions}, together with the consistency of $\hat{\tau}_{x}$. 
Next, $\hat{\Sigma}$ is trivially consistent, and $\hat{J}(\tau)$ is uniformly consistent on $[\varepsilon,1-\varepsilon]$ by Section A.4 in \cite{ACF2006}. 
Together with the consistency of $\hat{\tau}_{x}$ and continuity of the map $\tau \mapsto J(\tau)$, we obtain the consistency of $\hat{\sigma}_{x}^{2}$. 
Finally, observe that $\Delta_{h}^{3}\hat{Q}_{x}(\tau) = \Delta_{h}^{2}\hat{s}_{x}(\tau)$, and $\hat{s}_{x}(\tau) = \Delta_{h}Q_{x}(\tau) + O_{\Prob}((nh)^{-1/2}\sqrt{\log n})$ uniformly in $\tau \in [2\varepsilon/3,1-2\varepsilon/3]$ by (\ref{eq: rate sparsity}), so that $\Delta_{h}^{3}\hat{Q}_{x}(\tau) = \Delta_{h}^{3}Q_{x}(\tau) + O_{\Prob}((nh^{5})^{-1/2}\sqrt{\log n})$ uniformly in $\tau \in [\varepsilon,1-\varepsilon]$. 
The consistency of $\hat{v}_{x}$ then follows from the condition that $nh^{5}/\log n \to \infty$, 
continuity of the third derivative of $Q_{x}(\tau)$ at $\tau = \tau_{x}$, and the consistency of $\hat{\tau}_{x}$. This completes the proof. 
\qed

\section{Convergence of maximum of Chernoff random variables}
\label{sec: Gumbel}

In this appendix, we consider weak convergence of the maximum of independent Chernoff random variables. 
Let $Z_{1},\dots,Z_{n}$ be independent Chernoff random variables, and let $Z_{(n)} = \max_{1 \le i \le n}Z_{i}$ and $|Z|_{(n)} = \max_{1 \le i \le n}|Z_{i}|$.
Chernoff's distribution is known to be absolutely continuous, and denote its density by $f_{Z}$. In addition, let $F_{Z}$ denote the distribution function of Chernoff's distribution. 
An explicit form of $f_{Z}$ is unknown, but by  Corollary 3.4 of \cite{Groeneboom1989},  the tail behavior of $f_{Z}$ is given by
\begin{equation}
f_{Z}(z) \sim 2 \lambda |z| e^{-\frac{2}{3} |z|^{3} - \kappa |z|}, \quad |z| \to \infty, 
\label{eq: density}
\end{equation}
where $\lambda$ and $\kappa$ are positive constants whose explicit values can be found in \cite{Groeneboom1989}. 
The precise meaning of (\ref{eq: density}) is  that the ratio of the left and right hand sides approaches one as $|z| \to \infty$. This implies that 
\begin{equation}
1-F_{Z} (z) \sim \frac{\lambda}{z} e^{-\frac{2}{3}z^{3} -\kappa z}, \quad z \to \infty.
\label{eq: Chernoff tail}
\end{equation}
Cf. Lemma 2.1 in \cite{Ho1998}. The following lemma shows that both $Z_{(n)}$ and $|Z|_{(n)}$ converge in distribution to the Gumbel distribution as $n \to \infty$ after normalization. 
This lemma gives a supporting result for Remark \ref{rem: uniform rate}, but is of independent interest. 
Recall that the (standard) Gumbel distribution is a distribution on $\R$ with distribution function  $\Lambda (z) = e^{-e^{-z}}$. 

\begin{lemma}
\label{lem: Chernoff maximum}
Let 
\[
a_{n} = 3 \left ( \frac{2}{3} \right )^{1/3} (\log n)^{2/3}, \ b_{n} =  \left(\frac{3}{2}\log n \right) ^{1/3} - \frac{1}{a_{n}} \left [\kappa \left ( \frac{3}{2} \log n \right)^{1/3} + \frac{1}{3} \log \log n + \frac{1}{3} \log \frac{3}{2} - \log \lambda  \right ],
\]
and define $b_{n}'$ by replacing $\lambda$ by $2\lambda$ in the definition of $b_{n}$. Then we have for any $z \in \R$, 
\[
\lim_{n \to \infty}\Prob (a_{n} (Z_{(n)} - b_{n}) \le z) = e^{-e^{-z}} \quad \text{and} \quad \lim_{n \to \infty}\Prob (a_{n} (|Z|_{(n)} - b_{n}') \le z) = e^{-e^{-z}}.
\]
\end{lemma}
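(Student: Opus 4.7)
The approach is the standard Gnedenko route to the Gumbel limit: for iid random variables with distribution function $F_Z$, the normalized maximum $a_n(Z_{(n)}-b_n)$ converges in distribution to $\Lambda$ if and only if
\[
n\bigl[1-F_Z(b_n + z/a_n)\bigr] \to e^{-z} \quad \text{for every } z\in\R,
\]
since then $\Prob(a_n(Z_{(n)}-b_n)\le z) = F_Z(b_n+z/a_n)^n = (1-\tfrac{1}{n}[1+o(1)]e^{-z})^n \to e^{-e^{-z}}$. So the plan is to plug the prescribed constants into the tail asymptotic \eqref{eq: Chernoff tail} and verify the above limit, and then handle $|Z|_{(n)}$ by symmetry.

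The main technical step is the expansion of $\tfrac{2}{3}z_n^3$ where $z_n := b_n + z/a_n$. Writing $u_n = (\tfrac{3}{2}\log n)^{1/3}$ and $d_n := z_n - u_n = -\tfrac{1}{a_n}[\kappa u_n + \tfrac{1}{3}\log\log n + \tfrac{1}{3}\log\tfrac{3}{2} - \log\lambda] + z/a_n = O((\log n)^{-1/3})$, the key identity (which drives the choice of $a_n$) is the algebraic relation $2u_n^2/a_n = 1$, coming from $u_n^2/a_n = (3/2)^{2/3}/[3(2/3)^{1/3}] = 1/2$. Expanding,
\[
\tfrac{2}{3}z_n^3 = \tfrac{2}{3}u_n^3 + 2u_n^2 d_n + 2u_n d_n^2 + \tfrac{2}{3}d_n^3,
\]
where $\tfrac{2}{3}u_n^3 = \log n$, $2u_n^2 d_n = -\kappa u_n - \tfrac{1}{3}\log\log n - \tfrac{1}{3}\log\tfrac{3}{2} + \log\lambda + z$ by the identity $2u_n^2/a_n=1$, and the remaining terms $2u_n d_n^2, \tfrac{2}{3}d_n^3$ are $o(1)$. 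Adding $\kappa z_n = \kappa u_n + o(1)$ and using $\log z_n = \tfrac{1}{3}\log\log n + \tfrac{1}{3}\log\tfrac{3}{2} + o(1)$, the leading terms involving $\kappa u_n$, $\log\log n$ and $\log(3/2)$ cancel precisely, leaving
\[
\log z_n + \tfrac{2}{3}z_n^3 + \kappa z_n - \log\lambda = \log n + z + o(1).
\]
Combining with \eqref{eq: Chernoff tail}, which gives $\log[1-F_Z(z_n)] = \log\lambda - \log z_n - \tfrac{2}{3}z_n^3 - \kappa z_n + o(1)$, yields $n[1-F_Z(z_n)] \to e^{-z}$, and hence the Gumbel limit for $Z_{(n)}$.

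For $|Z|_{(n)}$, Chernoff's distribution is symmetric about the origin (it is defined as the argmax of a two-sided Brownian motion minus $t^2$), so $\Prob(|Z|>z) = 2(1-F_Z(z))$ for $z\ge 0$. Thus the tail of $|Z|$ differs from that of $Z$ only by the factor $2$, i.e.\ the constant $\lambda$ is replaced by $2\lambda$. Running the previous computation with this replacement — which only shifts the location constant from $b_n$ to $b_n'$ and leaves $a_n$ unchanged — gives $n\Prob(|Z|> b_n' + z/a_n) \to e^{-z}$ and therefore $a_n(|Z|_{(n)} - b_n')\stackrel{d}{\to}\Lambda$.

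The only subtle point is that the approximate relation $1-F_Z(z)\sim \lambda z^{-1}e^{-\frac{2}{3}z^3-\kappa z}$ is an asymptotic equivalence, not an exact equality; but since $z_n \to \infty$, the multiplicative $(1+o(1))$ correction affects only $o(1)$ terms in $\log[1-F_Z(z_n)]$, which is harmless. The algebraic cancellation identified above is the heart of the matter and is what singles out the precise forms of $a_n$, $b_n$, $b_n'$.
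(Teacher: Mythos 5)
Your proof is correct and takes essentially the same approach as the paper's: both arguments rest on the tail asymptotic (\ref{eq: Chernoff tail}) combined with the classical criterion $n(1-F(u_{n})) \to \tau \Leftrightarrow \Prob(X_{(n)} \le u_{n}) \to e^{-\tau}$ (Lemma \ref{lem: extreme}), and both reduce $|Z|_{(n)}$ to $Z_{(n)}$ by symmetry, which doubles the tail and replaces $\lambda$ by $2\lambda$. The only difference is directional — the paper defines $u_{n}$ implicitly by $n(1-F_{Z}(u_{n})) = e^{-z}$ and expands to derive the norming constants, whereas you substitute the given constants and verify the limit forward; your key identity $2u_{n}^{2}/a_{n}=1$ is precisely the cancellation the paper's expansion produces.
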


We note that \cite{Ho1998} already point out that Chernoff's distribution is in the domain of attraction of the Gumbel distribution (see \cite{Ho1998} p.219), but they do not derive explicit norming constants. 

The proof follows from the tail behavior of the Chernoff survival function (\ref{eq: Chernoff tail}) combined with the following lemma.

\begin{lemma}
\label{lem: extreme}
Let $X_{1},X_{2},\dots \sim F$ i.i.d. for some distribution function $F$, and let $X_{(n)} = \max_{1 \le i \le n}X_{i}$. 
For a given constant $\tau \ge 0$ and a given sequence $u_{n}$, we have 
\[
n (1-F(u_{n})) \to \tau \Leftrightarrow \Prob (X_{(n)} \le u_{n}) \to e^{-\tau}.
\]
\end{lemma}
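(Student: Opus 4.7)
The plan is to use the elementary independence identity $\Prob(X_{(n)} \le u_{n}) = F(u_{n})^{n}$ and then to pass between $n\varepsilon_{n}$, where $\varepsilon_{n} := 1-F(u_{n})$, and $\log F(u_{n})^{n} = n\log(1-\varepsilon_{n})$ by means of the Taylor expansion of $\log(1-x)$ near $x=0$. Once one shows that both hypotheses force $\varepsilon_{n}\to 0$, the equivalence is essentially a one-line calculation.

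First I would verify that each side of the claimed equivalence implies $\varepsilon_{n}\to 0$, i.e., $F(u_{n})\to 1$. For the $\Rightarrow$ direction this is immediate, since $n\varepsilon_{n}\to \tau <\infty$ and $n\to\infty$. For the $\Leftarrow$ direction, the limit $e^{-\tau}$ is strictly positive because $\tau\in[0,\infty)$, so $F(u_{n})^{n}$ stays bounded away from $0$ for all large $n$; this would fail if $F(u_{n})\le c<1$ along some subsequence, since then $F(u_{n})^{n}\to 0$ along that subsequence. Hence $\varepsilon_{n}\to 0$ in both cases.

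Given $\varepsilon_{n}\to 0$, I would use the expansion $-\log(1-\varepsilon_{n}) = \varepsilon_{n}+\tfrac{1}{2}\varepsilon_{n}^{2}+\tfrac{1}{3}\varepsilon_{n}^{3}+\cdots = \varepsilon_{n}\bigl(1+O(\varepsilon_{n})\bigr)$ to obtain
\[
-n\log F(u_{n}) \;=\; n\varepsilon_{n}\bigl(1+O(\varepsilon_{n})\bigr).
\]
Since the factor $1+O(\varepsilon_{n})$ tends to $1$, the left-hand side tends to $\tau$ if and only if $n\varepsilon_{n}\to\tau$. By continuity of $\exp$, the convergence $-n\log F(u_{n})\to \tau$ is in turn equivalent to $F(u_{n})^{n}=\Prob(X_{(n)}\le u_{n})\to e^{-\tau}$. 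Chaining these equivalences gives the lemma.

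The argument is essentially routine; the only non-trivial step is the reduction $\varepsilon_{n}\to 0$ in the $\Leftarrow$ direction, which is what excludes the degenerate situation $F(u_{n})\not\to 1$. All other calculations are just the Taylor expansion of $\log(1-x)$, and no additional regularity on $F$ is needed.
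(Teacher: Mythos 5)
Your proof is correct and complete. For this lemma the paper gives no argument of its own: it simply cites Theorem 1.5.1 of Leadbetter, Lindgren, and Rootz\'en, which is exactly this equivalence (stated there even for $\tau=\infty$). Your write-up is in substance the standard textbook proof of that theorem: reduce to $F(u_n)^n$, observe that either hypothesis forces $\varepsilon_n = 1-F(u_n)\to 0$ (the $\Leftarrow$ direction via the observation that $F(u_n)^n\to e^{-\tau}>0$ rules out $F(u_n)$ staying bounded away from $1$ along a subsequence), and then pass through $-n\log F(u_n)=n\varepsilon_n(1+O(\varepsilon_n))$. The two nontrivial points --- establishing $\varepsilon_n\to 0$ before invoking the expansion, and the invertibility of the factor $1+O(\varepsilon_n)$ so that the equivalence runs in both directions --- are both handled. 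So the only real difference is that you make the argument self-contained where the paper defers to a reference; nothing is missing.
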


\begin{proof}[Proof of Lemma \ref{lem: extreme}]
See \cite{Leadbetter1983} Theorem 1.5.1. 
\end{proof}

\begin{proof}[Proof of Lemma \ref{lem: Chernoff maximum}]
We first consider $Z_{(n)}$. Fix any $z \in \R$ and define $u_{n}$ by $n(1-F_{Z}(u_{n})) = e^{-z}$. Then by the preceding lemma we have $\lim_{n \to \infty} \Prob (Z_{(n)} \le u_{n}) = e^{-e^{-z}}$. We will find an explicit value of $u_{n}$. 
By (\ref{eq: Chernoff tail}), $u_{n}$ satisfies 
\[
\frac{n \lambda}{u_{n}} e^{z - \frac{2}{3}u_{n}^{3} - \kappa zu_{n}} \to 1.
\]
Taking logarithms of both sides, we have 
\begin{equation}
\log n + \log \lambda + z - \frac{2}{3} u_{n}^{3} - \kappa u_{n} - \log u_{n} = o(1).
\label{eq: asymptotic}
\end{equation}
Among the last three terms on the left hand side of (\ref{eq: asymptotic}), $\frac{2}{3} u_{n}^{3}$ is the dominant term, so that 
\begin{equation}
\frac{\frac{2}{3} u_{n}^{3}}{\log n} \to 1.
\label{eq: asymptotic2}
\end{equation}
Taking logarithms of both sides, we also have 
\[
\log u_{n} = \frac{1}{3} \left [ \log \log n + \log \frac{3}{2} \right ] + o(1).
\]
Plugging this into (\ref{eq: asymptotic}), we have 
\[
\frac{2}{3} u_{n}^{3}  = \log n + z -\kappa u_{n} - \frac{1}{3} \log \log n -  \frac{1}{3} \log \frac{3}{2} + \log \lambda + o(1).
\]
In addition, (\ref{eq: asymptotic2}) also implies that 
\[
u_{n} = \left ( \frac{3}{2} \log n \right )^{1/3} + \delta_{n} \quad \text{with} \ \delta_{n} = o((\log n)^{1/3}).
\]
Plugging this into the preceding equation, using the identity $(a+b)^{3}= a^{3} + 3a^{2}b + 3ab^{2} + b^{3}$, and comparing the orders, we see that $\delta_{n} = o(1)$. Conclude that 
\[
u_{n}^{3} = \left ( \frac{3}{2} \log n \right ) \left [ 1+ \frac{z-\kappa \left ( \frac{3}{2} \log n \right)^{1/3} -\frac{1}{3} \log \log n - \frac{1}{3} \log \frac{3}{2} + \log \lambda}{\log n} + o((\log n)^{-1}) \right ].
\]
Using $(1+x)^{1/3} = 1+x/3+O(x^{2})$ as $x \to 0$, we have 
\[
\begin{split}
u_{n} &= \left ( \frac{3}{2} \log n \right )^{1/3}  \left [ 1+ \frac{z-\kappa \left ( \frac{3}{2} \log n \right)^{1/3} -\frac{1}{3} \log \log n - \frac{1}{3} \log \frac{3}{2} + \log \lambda}{3 \log n} + o((\log n)^{-1}) \right ] \\
&=a_{n}^{-1}z + b_{n} + o(a_{n}^{-1}). 
\end{split}
\]
Therefore, we have $\Prob (Z_{(n)} \le u_{n}) = \Prob (a_{n}(Z_{(n)} - b_{n}) \le z + o(1))$, which leads to the desired result for $Z_{(n)}$. 

The proof for $|Z|_{(n)}$ is completely analogous, since by the symmetry of Chernoff's distribution, the distribution function $G_{Z}$ of $|Z|$ is $G_{Z}(z) =2F_{Z}(z) - 1$, so that $1-G_{Z}(z) = 2(1-F_{Z}(z))$. 
\end{proof}

\bibliographystyle{plain}
\bibliography{modal}

\end{document}